\numberwithin{equation}{section}
\newcommand{\Rmnum}[1]{\expandafter\@slowromancap\romannumeral #1@}
\numberwithin{equation}{section}
\newtheorem{thm}{Theorem}[section]
\newtheorem{corollary}{Corollary}[section]
\newtheorem{lem}{Lemma}[section]
\newtheorem{definition}{Definition}[section]
\newtheorem{Rem}{Remark}[section]
\newtheorem{example}{Example}[section]
\def\eps{\varepsilon}
\def\ti{\tilde}
\def\<{{\langle}}
\def\>{{\rangle}}
\def\({{\Big(}}
\def\){{\Big)}}
\def\]{{\Big]}}
\def\[{{\Big[}}
\def\bx{{\mathbf{x}}}
\def\dif{{\mathord{{\rm d}}}}
\def\min{{\mathord{{\rm min}}}}
\def\={&\!\!=\!\!&}
\def\bt{\begin{theorem}}
\def\et{\end{theorem}}
\def\bl{\begin{lemma}}
\def\el{\end{lemma}}
\def\br{\begin{remark}}
\def\er{\end{remark}}
\def\bd{\begin{definition}}
\def\ed{\end{definition}}
\def\bp{\begin{proposition}}
\def\ep{\end{proposition}}
\def\bc{\begin{corollary}}
\def\ec{\end{corollary}}
\def\bx{\begin{Examples}}
\def\ex{\end{Examples}}
\def\cC{{\mathcal C}}
\def\cD{{\mathcal D}}
\def\cF{{\mathcal F}}
\def\cP{{\mathcal P}}
\def\mE{{\mathbb E}}
\def\mI{{\mathbb I}}
\def\mN{{\mathbb N}}
\def\mP{{\mathbb P}}
\def\mR{{\mathbb R}}
\def\mZ{{\mathbb Z}}
\def\sB{{\mathscr B}}
\def\geq{\geqslant}
\def\leq{\leqslant}
\numberwithin{equation}{section}
\title{\bf{Exponential Mixing for SDEs under the total variation$^*$}\thanks{The authors
were Supported by 973 Program, No. 2011CB808000 and
Key Laboratory of Random Complex Structures and Data Science, No.2008DP173182, NSFC, No.:10721101, 11271356, 11371041.}}
\author{   Xuhui Peng  \\
{\em\tiny  College of Mathematics and Computer Science,Hunan Normal University, 
\\
Changsha, Hunan 410081, P. R. China
 }\\
%{\em\tiny  Changsha, {\rm 410081}, P.R.China.}\\
}
\author{Rangrang  Zhang$^\dag$  \\
{\em\tiny  Department of  Mathematics,
Beijing Institute of Technology, Beijing, 100081, China.
 }\\
%{\em\tiny  Changsha, {\rm 410081}, P.R.China.}\\
}
\date{}
\begin{document}
\footnote{ Email addresses:  xhpeng@hunnu.edu.cn,  rrzhang@amss.ac.cn}
\footnote{$^*$ Xuhui Peng
was  supported by NSFC (No.11501195), a Scientific Research Fund of Hunan Provincial Education Department (No.17C0953),
  the Youth Scientific Research Fund of Hunan Normal University (No.Math140650) and  the Construct Program of the Key Discipline in Hunan Province. }

\maketitle
\begin{abstract}
We establish a  general criterion which ensures  exponential  ergodicity   of
Markov process on $\mathbb R^d$. %under the total variation.
Compared  with   the  classical irreducible condition,
we  only require
 a weak   form of  irreducibility   given by Hairer and Mattingly
    [\emph{Annals of Probability} \textbf{36(6)} (2008) 2050-2091].
     Applying  our  criterion  to
 stochastic differential equations driven by  L\'evy noise, we obtain the exponential ergodicity. Our noise can be more degenerate  than the existing   results.

\vskip0.5cm\noindent{\bf Keywords:} Exponential Mixing, Coupling Method, Ergodic.  \vspace{1mm}\\
\noindent{{\bf MSC 2000:} 60H15; 60H07}
\end{abstract}

\section{Introduction}

%\subsection{Motivations}
The objective of  this  paper  is to  study  exponential  ergodicity of  Markov process on $\mathbb R^d$ under the total variation. There are many works  on this topic, we only mention some of them which are related to our work.

 In 1990s, Meyn and Tweedie  established  a framework for  ergodicity in \cite{MST3}.
 %MST1,MST2,MST4,
%Down et al \cite{DM} developed a similar  theory  for  $\psi$-irreducible  continuous process.
% In their  framework,
For $\psi$-irreducible and aperiodic Markov chain $\{X_t, t\in \mZ^{+}\}$   on a general state space $E$, they obtained the  exponentially  ergodic property  %under the conditions:
 if there exist  a petite set $\cC$, constants $\alpha<1,\beta<\infty$,  a  time $t=1$ and a function  $V\geq 1$ finite  at some  one
$x_0\in E$ satisfying
\begin{eqnarray}\label{z-3}
  P_tV(x) \leq  \alpha   V(x)+\beta    I_{\cC}(x),\quad x\in E.
\end{eqnarray}
% \begin{eqnarray*}
%   \int_{E}P(x,\dif y)V(y)-V(x)\leq  -\beta V(x)+b \mathbbold{1}_{\cC}(x),\quad x\in E.
% \end{eqnarray*}
More details are referred to   \cite[Theorem 15.0.1]{MST3}.
% including definitions of $\psi$-irreducible,  aperiodic, petite set etc.
For  general     Markov process $\{X_t,t\in \mR^+\}$, under a similar condition,
Down, Meyn and Tweedie \cite[Thoerem 5.2]{DM}  obtained   exponentially  ergodic property.

% There  are   two fundamental assumptions in the Meyn-Tweedie \cite{MST3} framework for ergodicity.  The first is the existence of  a  Lyapunov function,  which  ensures  that dynamics will  move inward average outside some compact region $\cC$. The second  is the existence of  a neighborhood $\cN$ of some distinguished points in $\cC$,  which is uniformly reachable from inside $\cC$ and the probability densities are smooth  in $\cC$.
% For   accurate meaning of the second  fundamental assumption, we refer the readers to \cite[Assumption 2.1]{MSH}.
 %  Employing   these  two fundamental assumptions,

Following the ideas  in \cite{MST3},
 Mattingly  et al.  \cite[Theorem 2.5]{MSH}  obtained   exponentially  ergodic property  for Markov chain or process  $X_t$  on  $(\mR^d,\sB(\mR^d))$ under  the following three  hypotheses:
\begin{itemize}
\item[$(\textbf{M}_1)$]  For some  fixed  compact set $ \cC \in\sB(\mR^d)$ and some  $y^*\in int(\cC)$, there is, for any $\delta>0$, a $t_0=t_0(\delta)>0$ such that
 \begin{eqnarray*}
   P_{t_0}(x,B_\delta(y^*))>0,   \quad x\in \cC,
 \end{eqnarray*}
 where   $B_\delta(y^*):=\{u\in \mR^d, |u-y^*|<\delta\}$;
 \item[$(\textbf{M}_2)$]
 For $t>0$, the transition kernel possesses a  density $p_t(x,y)$, precisely
     \begin{eqnarray*}
       P_t(x,A)=\int_A p_t(x,y)\dif y, \forall x\in \cC, A\in \sB(\mR^d)\cap \sB(\cC)
     \end{eqnarray*}
 and $p_t(x,y)$ is  jointly continuous in $(x,y)\in \cC\times \cC,$ where
 $\cC$ is the same as that  in Hypothesis $\textbf{M}_1;$
\item[$(\textbf{LC})$]
%Let $P_t$ be a Markov semigroup. One says that
Lyapunov condition holds for $P_t$, that is,  there is a function $V:\mR^d\rightarrow [1,\infty)$  with $\lim_{x\rightarrow \infty}V(x) = +\infty$ such that for some $t_*>0,$  and some real numbers $\alpha \in (0, 1)$, $\beta \in [0,\infty)$,
    \begin{eqnarray*}
    && P_{t_*}V(x)\leq \alpha  V (x) + \beta.
 \end{eqnarray*}
\end{itemize}
Their   result can be applied to a number of   SDEs. We refer  % the  readers
 to \cite{MSH,MS,Za} etc.
%\textcolor[rgb]{1.00,0.00,0.00}{Their result is easily applied to SDEs.}
%Their result  can be   applied  to a variety of SDEs.

% proof is essentially  that of  Meyn and Tweedie \cite{MST3}. However, they employ a particular but useful, reachable  structure  which can

 % proved a ergodic  theorem
% which is general enough to apply to a variety of SDEs.

%[Theorem 2.5]
We also  mention  the   ergodic theorem given  by  Rey-Bellet \cite{LR}.
%In \cite{LR}, Rey-Bellet
%We also  mention  the   ergodic theorem   in the very readable  lecture notes  given  by Rey-Bellet \cite{LR}. In his   lecture notes,  he
He obtained an  exponential ergodicity
if the  Hypotheses $\textbf{LC}$ and
\begin{itemize}
\item[$(\textbf{H}_a)$] The Markov process $X_t$ is irreducible aperiodic, i.e there exists a $t_0>0$ such that
$$
P_{t_0}(x,A)>0, \   \forall x \in \mathbb{R}^d, \forall \text{ open  set } A,
$$
\item[$(\textbf{H}_b)$]   For $t>0$, the transition kernel possesses a  density $p_t(x,y)$
 which is a  smooth  function of $(x,y),$ in particular, $P_t$ is strong Feller,
\end{itemize}
hold.
% Obviously, Hypotheses $\textbf{H}_a$ and   $\textbf{H}_b$ are stronger than
%$\textbf{MSH}_1$ and $\textbf{MSH}_2.$

There are other ways to obtain  exponential ergodicity. % using other methods,
We refer   to   \cite{BGL,B,L} etc.  for  log-Sobolev  or hypercontractivity estimates and \cite{WJ,Zhang} etc.   for coupling method.

 %such as the coupling method, where  log-Sobolev  or hypercontractivity estimates are used (cf. \cite{BGL,B,L},(cf. \cite{WJ,Zhang} etc.).

%frameworks to work within. There are assumptions which lead to log-Sobolev  or hypercontractivity estimates  (cf. \cite{BGL,B,L}, etc.). Using  the coupling method is another   way  to  obtain the exponential ergodicity  (cf. \cite{WJ,Zhang}, etc.).
%

% We consider a time homogeneous Markov
% process $X$ living  in  $\mR^d$,  and  write
% \begin{align*}
%  X_t=X_t(x),
% \end{align*}
% where $x\in \mR^d$ is the initial value.  We denote by $\{P_t\}_{t\in \mR^{+}}$ the Markov semi-group associated Markov famly $(X_{\cdot}(x))_{x\in \mR^d}$ and use $P_t(x, \cdot), t \in \mR^{+}
% , x \in \mR^d$ to denote   the transition function for the process $X$.
% In this article,    the process $X$
% is supposed to be strong Markov, be  adapted to a filtered  probability space $(\Omega,\cF,(\cF_t)_{t\geq 0}$ $,\mP)$ and to have c\'adl\'ag trajectories.

In this paper, we present   a new criterion to  ensure  exponential  ergodicity   of
Markov process on $\mathbb R^d$,
in which  our   hypotheses  are weaker and  easier to  verify  than Hypotheses $\textbf{H}_a,$ $\textbf{M}_1$
in some situations.
To show this,  an example (Example \ref{E-1} below) is given. In our criterion,
we  only require
 a weak   form of  irreducibility  given by Hairer and Mattingly \cite{martin,HM}
 instead of   the classical  irreducible condition (Hypothesis $\textbf{H}_a$).
As an application, we apply our criterion to stochastic differential equations driven by  L\'evy noise and  obtain its exponential mixing property under the total variation. However, it is not easy  to verify Hypothesis  $\textbf{M}_1$ or
apply the Meyn-Tweedie framework \cite{MST3} in our cases.  Moreover, compared with the results in \cite{Kulik,Q,WJ,WJ2013}, our noises can be  more  degenerate.

%\subsection{Layout and notations}
This paper is organized as follows: In Section 2,   we present   a new criterion.
In Section 3, we give a proof of the   criterion.
In Section 4, we  apply our criterion  to stochastic differential equations driven by  L\'evy noise.
%and    obtain an  exponential mixing under  the total variation
 % Compared to the results in \cite{Kulik,Q,WJ} etc., our noises  can be  more degenerate.
   % Compared  to  the results in \cite{Xu}, we obtain the exponential mixing    under a  stronger norm.
% \end{Rem}
% Our noise can be more degenerate than   existed  results.

Before concluding this introduction, we collect some notations  and make some conventions for later use.
\begin{itemize}
% \item $\bullet$
\item $\nabla=(\partial_1,\cdots,\partial_d)$ denotes the gradient operator.
\item $\sB(\mR^d) $  denotes the collection of all Borel measurable sets on $\mR^d$  and   $\sB_b(\mR^d)$  denotes    the space of bounded and Borel measurable functions.
\item  For any $f\in \sB_b(\mR^d)$,
  $\|f\|_\infty:=ess.\sup_{x\in \mR^d}|f(x)|.$
  \item For any $x=(x_1,\cdots,x_d),y=(y_1,\cdots,y_d)\in\mR^d,$ $|x|:=\sqrt{\sum_{i=1}^dx_i^2}, ~~|(x,y)|:=\sqrt{|x|^2+|y|^2}.$
  \item   $\cD(X)$ denotes  the law of  random variable  $X$.
  \item  For  a  sign measure $\mu$ on $(\mR^d,\sB(\mR^d)$, $\mu(f):=\int_{\mR^d}f(y)\mu(\dif y)$ and
       $$ \|\mu \|_{var}:=\sup\{|\mu(\Gamma)| ~~~~:~~\Gamma\in\sB(\mR^d)\}.$$
  \item For any  $R>0,$  $B_{R}:=\{u\in \mR^d, |u|< R\},$ $\overline{B}_{R}:=\{u\in \mR^d, |u|\leq R\}.$
  \item The letter $C$ denotes an unimportant constant, whose value  may change  in different places.
    \item   When the initial value of  Markov  process $X_t$ is $x\in \mR^d$, we also denote this process  by $X_t(x).$
  \item For any $x\in \mR^d,$ $\delta_x$  denotes  the Dirac measure
concentrated  on  $x$.
 \end{itemize}

%\subsection{Main Results}
\section{ A general criterion}
This section is devoted to the statement  of a general criterion.
Let $(\Omega,\cF,(\cF_t)_{t\geq 0}$ $,\mP)$ be a filtered  probability space,  and  $\{X_t, t\in \mR^+\}$ be a strong Markov process on $\mathbb R^d$ that  is  adapted to this  filtered  probability space and  supposed to have c\'adl\'ag trajectories.
Let  $\{P_t\}_{t\in \mR^{+}}$ be  the Markov semi-group associated with the  process $X$, $P_t(x, \cdot)$  be      the transition kernel for  $X$.

% We say $\mu$ is a invariant measure for $P_t$, if,
% \begin{eqnarray*}
% \mu(A)=P_t^*\mu(A):=\int_{\mR^d}P_t(x,A)\mu(dx), ~~~~\forall t>0, ~\forall A \in \sB(\mR^d).
% \end{eqnarray*}

%Assume  $F(u)\geq 1$ is   a continuous  function  on $\mR^d$ tending to $+\infty$ as $|u|\rightarrow \infty$.
Our Hypotheses in this article    are
\begin{itemize}
  % \item $\textbf{H}_1$(Lyapunov function condition). There are
  %  positive constants   $t_*,R_*,C_*$ and $a<1$ such that
%  \begin{eqnarray*}
%   P_{t_*}F(x)  &\leq &aF(x), \ \ \ \  for~ |x|\geq R_*,
%  \\ P_tF(x)  &\leq &C_*, \ \ \ \  for~ |x|\leq R_*, \forall t\geq 0.
%  \end{eqnarray*}
%  \end{itemize}
 %  \begin{itemize}
  \item[$(\textbf{H}_1)$]  (Weak form of irreducibility).
  For any $R,\delta>0,$
  there exist positive constants $R_0:=R_0(R)>0$
 and  $T_0:=T_0(R,\delta)$ such that for any $t \geq T_0$ and any  $x,y\in \overline{B}_{R}$,
\begin{eqnarray*}
  \sup_{\pi\in \Gamma(P_t^*\delta_{x}, P_t^*\delta_{y})}\pi\left\{(x',y')\in \mR^d\times \mR^d, |x'-y'| \leq \delta, ~~~~x',y'\in \overline{B}_{R_0}\right\}>0,
\end{eqnarray*}
where
$P_t^*$ is the semigroup acting  on probability measures which is dual to $P_t$  and  $\Gamma(\mu,\nu)$ denotes the set of  probability measures $\pi$ on $\mR^d\times \mR^d$ such that
$$\pi(A\times \mR^d)=\mu(A),\pi(\mR^d\times A)=\nu(A), \forall A\in \sB(\mR^d).$$
  \item[$(\textbf{H}_2)$]
For any  $t>0,x\in \mR^d,$ we have
  \begin{eqnarray}\label{x-6}
    \lim_{y\rightarrow x} \sup_{\|f\|_{\infty}\leq 1}\[P_tf(x)-P_tf(y)\]=0.
  \end{eqnarray}
  %or
%
\end{itemize}

\begin{Rem}
Hypothesis $\textbf{H}_2$ holds if we have the following gradient estimate
\begin{eqnarray*}
    |\nabla P_tf(x)|\leq C_t(|x|)\|f\|_{\infty},~~~~~~~ \forall f\in \mathscr{B}_b(\mR^d),
 \end{eqnarray*}
  where  $C_t(\cdot)$  is a  locally bounded function from  $\mR^{+}$ to $\mR^{+}$ for fixed $t.$
\end{Rem}
 We now state our criterion,  its proof will be given   in  Section 3.
\begin{thm}\label{9}
Assume Hypotheses  $\textbf{LC}, \textbf{H}_1,\textbf{H}_2$  hold,   then the  process $X_t$ is  exponentially   ergodic under the total variation, i.e, there exist a unique invariant probability measure $\mu$ for $P_t$ and some positive   constants  $\theta,C$ such that for any $x\in \mR^d$,
\begin{eqnarray}\label{16}
  \|P_t(x,\cdot)-\mu\|_{var}\leq Ce^{-\theta t}(1+V(x)),\ \ \ \forall t\geq 0.
\end{eqnarray}
\end{thm}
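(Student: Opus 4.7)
The plan is to reduce \eqref{16} to the classical scheme \emph{Lyapunov drift + small set} and then invoke a Harris-type theorem. Since $\textbf{LC}$ is already in hand, the core task is to promote $\textbf{H}_1$ and $\textbf{H}_2$ into the uniform minorization
$$P_T(x,\cdot) \ge \eta\, \nu(\cdot), \qquad \forall x \in \overline B_R,$$
for every $R>0$, with suitable $T=T(R)>0$, $\eta=\eta(R)>0$ and some nontrivial probability measure $\nu$ on $\mR^d$. Once both ingredients are in place, the weighted-TV Harris theorem of Hairer--Mattingly \cite{martin} (or the classical Meyn--Tweedie results \cite{MST3,DM}) delivers a unique invariant measure $\mu$ together with the bound \eqref{16}.

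To obtain the small-set estimate, fix $R>0$, pick any reference point $y_0 \in \overline B_R$, and treat $\delta>0$ as a free parameter. Apply $\textbf{H}_1$ to the pair $(x,y_0)$ to obtain $R_0=R_0(R)$, $T_0=T_0(R,\delta)$, and a coupling $\pi_x$ of $P_{T_0}^*\delta_x$ and $P_{T_0}^*\delta_{y_0}$ giving positive mass to
$$A_\delta := \{(u,v)\in \overline B_{R_0}\times \overline B_{R_0} : |u-v|\le\delta\}.$$
The Markov property yields $P_{T_0+s}(x,\cdot)=\int P_s(u,\cdot)\,\pi_x(\dif u,\dif v)$ (and the analogous identity at $y_0$), so
$$\|P_{T_0+s}(x,\cdot)-P_{T_0+s}(y_0,\cdot)\|_{var}\le \int\|P_s(u,\cdot)-P_s(v,\cdot)\|_{var}\,\pi_x(\dif u,\dif v).$$
Thanks to $\textbf{H}_2$ and the triangle inequality, $(u,v)\mapsto\|P_s(u,\cdot)-P_s(v,\cdot)\|_{var}$ is jointly continuous and vanishes on the diagonal, hence is uniformly continuous on the compact set $\overline B_{R_0}\times\overline B_{R_0}$. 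Choosing $\delta$ small enough that this TV distance is $\le\eps$ on $A_\delta$, and using the trivial bound $\le 1$ off $A_\delta$,
$$\|P_{T_0+s}(x,\cdot)-P_{T_0+s}(y_0,\cdot)\|_{var}\le 1-(1-\eps)\pi_x(A_\delta)<1.$$
A second invocation of $\textbf{H}_2$, combined with compactness of $\overline B_R$, is then used to upgrade this strict pointwise inequality to the \emph{uniform} estimate $\|P_T(x,\cdot)-P_T(y_0,\cdot)\|_{var}\le 1-\eta$ over $x\in \overline B_R$, with $T:=T_0+s$; the minorization $P_T(x,\cdot)\ge \eta\,\nu(\cdot)$ then follows from the standard identity relating TV distance to the total mass of the measure-theoretic minimum.

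Finally, choose $R$ large enough that $\{V\le 2\beta/(1-\alpha)\}\subset\overline B_R$; the Lyapunov inequality forces excursions outside $\overline B_R$ to return geometrically fast, while the minorization supplies a common piece of mass $\eta\,\nu$ on each visit. The Harris theorem converts these two facts into \eqref{16} with explicit $\theta,C$. I expect the main obstacle to be the passage from the \emph{pointwise} positivity $\pi_x(A_\delta)>0$ provided by $\textbf{H}_1$ (whose lower bound may a priori depend on $x$) to a \emph{uniform} lower bound on $\overline B_R$; every other step is routine or standard. Resolving this uniformity requires a second deployment of $\textbf{H}_2$ and compactness to turn a qualitative non-singularity statement into a quantitative gap, and that is the technical heart of the argument.
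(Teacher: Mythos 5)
Your architecture (Lyapunov drift plus a regeneration condition on a bounded set) is close in spirit to the paper's, and your handling of the two quantitative points --- choosing $\delta$ via $\textbf{H}_2$ and compactness of $\overline{B}_{R_0}\times\overline{B}_{R_0}$ so that the one-step total variation distance is small on $A_\delta$, and then upgrading the pointwise bound $\|P_T(x,\cdot)-P_T(y_0,\cdot)\|_{var}<1$ to a uniform one by a second use of $\textbf{H}_2$ and compactness --- is exactly what Lemma \ref{2} of the paper does. The genuine gap is your final reduction: the uniform pairwise bound $\|P_T(x,\cdot)-P_T(y_0,\cdot)\|_{var}\leq 1-\eta$ for all $x\in\overline{B}_R$ does \emph{not} yield a minorization $P_T(x,\cdot)\geq\eta\,\nu(\cdot)$ with a single measure $\nu$. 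The ``standard identity'' only says that, for each fixed $x$, the measure $P_T(x,\cdot)\wedge P_T(y_0,\cdot)$ has total mass at least $\eta$; this minimum measure depends on $x$, and the overlaps for different $x$ may sit on essentially disjoint pieces of $P_T(y_0,\cdot)$, so no common $\nu$ need exist. In the terminology of Roberts and Rosenthal, your argument shows that $\overline{B}_R$ is \emph{pseudo-small}, and a pseudo-small set need not be small. This is not a removable technicality here: the whole point of $\textbf{H}_1$ is to cover situations such as Example \ref{E-1}, where $\mP(X_t\geq e^{-kt}x)=1$ and classical irreducibility fails, and under $\textbf{H}_1$--$\textbf{H}_2$ alone you have no other handle with which to manufacture a genuine small set. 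Consequently the classical Meyn--Tweedie/Harris theorem you invoke is not applicable as stated.

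The repair is to bypass the minorization entirely and use the pairwise bound directly, which is what the paper does. It forms, at each step of length $T$, the \emph{maximal coupling} of $P_T(x,\cdot)$ and $P_T(y,\cdot)$ (Lemma \ref{3}), so that on each visit of the pair to $\overline{B}_{R_*}$ the two chains merge with probability at least $1-p$, where $p=\sup_{x,y\in\overline{B}_{R_*}}\|P_T(x,\cdot)-P_T(y,\cdot)\|_{var}<1$ is the uniform constant from Lemma \ref{2}. The Lyapunov condition then gives exponential moments for the successive return times $\tau_k$ of the pair to $\overline{B}_{R_*}$ (Lemmas \ref{23} and \ref{36}), the bound $\mP(\tau_{x,y}>\tau_k+1)\leq p^{k}$ gives geometric decay over successive trials, and combining the two via H\"older yields an exponential moment for the coupling time (Lemma \ref{1}) and hence (\ref{16}). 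Alternatively, you could keep your reduction but invoke an abstract ergodic theorem formulated for pseudo-small ($d$-small) sets rather than small sets; either way, the passage from pairwise total variation overlap to a common minorizing measure must be abandoned.
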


Obviously, our Hypothesis  $\textbf{H}_1$ are weaker than  $\textbf{H}_a$ and  $\textbf{M}_1$.
Although   Hypothesis $\textbf{H}_2$  is stronger than   $\textbf{H}_b$,  there is no
essential  differences in   verification   for stochastic differential equations. Specifically,
  the usual sufficient  condition for   Hypothesis  $\textbf{H}_b$ is  H\"ormander condition in Brownian case  or some  similar condition in L\'evy case, for example, \cite{DPSZ,Ichihara,SZ} etc. Under these conditions, $\textbf{H}_2$ also holds. For more details, one can see \cite[Theorem 4.2]{DP1} for the  Brownian case, \cite[Remark 1.2 and  Theorem 1.3]{DPSZ} and  \cite[Theorem 1.1]{SZ}  for the  L\'evy case.

% Under  H\"ormander condition or some  similar condition, \textcolor[rgb]{1.00,0.00,0.00}{the Malliavin matrix of the solution to SDEs is almost invertible (cf. \cite{Nualart}). Once the Malliavin matrix is almost invertible, Hypothesis  $\textbf{H}_2$ holds.  } For this, one can see \cite[Corollary 9.6.12]{Bogachev}\cite[Theorem 1.1]{DPSZ}  for more  details.

The advantage of  Hypothesis  $\textbf{H}_1$ is that, we only need to give some moment estimates such as
\begin{eqnarray}\label{x-7}
  \mE \[|X_t(x)-X_t(y)|^{p}\], \quad  \mE\[|X_t(x)|^p\], \quad  \text{here }p>0
\end{eqnarray}
in some situations,
and don't need to verify  Hypothesis  $\textbf{H}_a$, which  is always  complicated and needs control theory. One  can  see Section 3 and  \cite[Theorem 6.1]{LR} for more  details.

Our Hypotheses in  Theorem \ref{9} are  weaker  than Hypotheses  $\textbf{H}_a, \textbf{H}_b$ in some situations. To show this, we  consider the following example.
\begin{example}\label{E-1}
Fix two  constants  $k>0, \sigma \neq 0.$ Consider the following SDEs,
\begin{eqnarray}
\label{40}
\left\{
  \begin{split}
   & \dif X_t=Y_t^2\dif t-kX_t\dif t
   \\ &\dif Y_t=-kY_t\dif t+\sigma \dif W_t
   \\& (X_t,Y_t)|_{t=0}=(x,y)\in \mR\times \mR,
  \end{split}
  \right.
\end{eqnarray}
where $(W_t)_{t\geq 0}$ is a one  dimensional  Brownian motion,
 then the hypotheses in Theorem \ref{9} hold, but the  Hypothesis  $\textbf{H}_a$ doesn't hold.
\end{example}
\begin{proof}
Clearly, we have
  \begin{eqnarray}
  \label{41}
  \left\{
  \begin{split}
    Y_t&=Y_t(x,y)=e^{-kt}y+\sigma \int_0^te^{-k(t-s)}\dif W_s,
    \\ X_t&= X_t(x,y)=e^{-kt}x+\int_0^te^{-k(t-s)} Y_s^2\dif s.
    \end{split}
    \right.
  \end{eqnarray}

(\Rmnum{1})   By (\ref{41}), we obtain
$$
\mP(X_t\geq e^{-kt}x)=1.
$$
Therefore, Hypothesis  $\textbf{H}_a$ doesn't hold.

Now, we give a verification of   the  hypotheses in Theorem \ref{9}.

(\Rmnum{2})
 The verification of Hypothesis  $\textbf{LC}$. Let $V(x,y)=|y|^2+|x|$.  By (\ref{41}), we obtain
 \begin{eqnarray*}
   \mE\[V(X_t,Y_t)\]&=&\mE|Y_t|^2+\mE|X_t|
   \\ &\leq & \mE|Y_t|^2+e^{-kt}|x|+\int_0^te^{-k(t-s)} \mE Y_s^2\dif s
   \\ &\leq & \big(2e^{-2kt}|y|^2+C\big)+e^{-kt}|x|+\int_0^te^{-k(t-s)}\big(2  e^{-2ks}|y|^2+C\big) \dif s
   \\ &\leq & 2 e^{-2kt}|y|^2+C+e^{-kt}|x|+\frac{2}{k}e^{-kt}(1-e^{-kt})|y|^2+C,
\\ &\leq &e^{-kt}|x|+\big(2e^{-2kt}+\frac{2}{k}e^{-2kt}\big)|y|^2+C,
 \end{eqnarray*}
which gives the desired result.

(\Rmnum{3}) The verification  of Hypothesis  $\textbf{H}_1$.
  Let $(B_t)_{t\geq 0}$ be a Brownian motion which is independent of $(W_t)_{t\geq 0}$.   For any   $(x',y')\in \mR^2$.
  Let $(\tilde{X}_t,\tilde{Y}_t)$ be the solution to Eq.(\ref{40}) with initial value $(x',y')$ and noise $(B_t)_{t\geq 0}$,
  that is
  \begin{eqnarray}
  \label{42}
  \left\{
  \begin{split}
    \tilde{Y}_t&=\tilde{Y}_t(x',y')=e^{-kt}y'+\sigma e^{-kt}\int_0^te^{ks}\dif B_s,
    \\ \tilde{X}_t&= \tilde{X}_t(x',y')=e^{-kt}x'+\int_0^te^{-ks} \tilde{Y}_s^2\dif s.
    \end{split}
    \right.
  \end{eqnarray}
  For any  $R,\delta>0$  and   $(x,y),(x',y')\in B_{R}$, it is easy to see that   there exists  a constant  $T_0:=T_0(R,\delta)$ such that for any $t \geq T_0$,
\begin{eqnarray}
\label{44}
&& e^{-kt}|x-x'|<\delta.
\end{eqnarray}
For any $ R_0,\delta,\eps>0$ and  $ t>T_0$,
  let $h,\tilde{h}\in C([0,t],\mR)$ satisfying
\begin{eqnarray*}
&& h_0=y, \ \ \tilde{h}_0=y',
\\ &&   |h_t-\tilde{h}_t|<\frac{\delta}{2},\   |h_t|\leq \frac{R_0}{2}, \   |\ti{h}_t|\leq \frac{R_0}{2},
\\ && \int_0^te^{-ks}|h_s^2-\tilde{h}_s^2|\dif s<\frac{\delta}{8},
\end{eqnarray*}
and  denote
\begin{eqnarray*}
  B^h_\eps=\{u(\cdot)\in C([0,t],\mR):u(0)=y, \sup_{s\in [0,t]}|u(s)-h_s|< \eps \wedge 1 \wedge \frac{R_0}{2}\},
  \\  B^{\tilde{h}}_\eps=\{u(\cdot)\in C([0,t],\mR):u(0)=y', \sup_{s\in [0,t]}|u(s)-\tilde{h}_s|< \eps \wedge 1 \wedge \frac{R_0}{2}\}.
\end{eqnarray*}
Then there exists a  positive constant $\eps=\eps(t,\delta,k)$,   such that for any  $u_\cdot\in B^h_\eps,\tilde{u}_\cdot \in B^{\tilde{h}}_\eps $
\begin{eqnarray}
\label{43}
\begin{split}
& \int_0^te^{-ks}|u_s^2-\tilde{u}_s^2| \dif s<\delta,
\\ &|u_t-\tilde{u}_t|<\delta,
\\ & |u_t|<R_0,\ |\tilde{u}_t|<R_0.
\end{split}
\end{eqnarray}
 By   the definitions  of $B^h_\eps, B^{\tilde{h}}_\eps$ and (\ref{41})-(\ref{43}), we derive  that
\begin{eqnarray*}
  \nonumber && \mP(|X_t-\tilde{X}_t|<2\delta, |Y_t-\tilde{Y}_t|<2\delta, |Y_t|\leq R_0, |\tilde{Y}_t|\leq R_0 )
  \\&&\geq \mP((Y_s)_{s\in [0,t]}\in B^h_\eps, (\tilde{Y}_s)_{s\in [0,t]}\in  B^{\tilde{h}}_\eps)
  \\ &&\geq   \mP((Y_s)_{s\in [0,t]}\in B^h_\eps)\cdot  \mP( (\tilde{Y}_s)_{s\in [0,t]}\in  B^{\tilde{h}}_\eps)
  \\ &&>0,
  \end{eqnarray*}
  where in the last inequality  we have used  \cite[Theorem 3.2]{P}.
This completes the verification  of Hypothesis $\textbf{H}_1$.

(\Rmnum{4}) The verification  of Hypothesis $\textbf{H}_2$. By \cite[Theorem 4.2]{DP1}, Hypothesis  $\textbf{H}_2$ holds.

\end{proof}

In the end of this part, we  state   the main  ideas used in the proof of  Theorem  \ref{9}. A  concrete proof  is presented in Section 3.

The main tool to prove Theorem \ref{9} is the coupling method.
%  introduced by Odasso \cite{Wang}.
 In the proof of Theorem  \ref{9}, we also adopt  some ideas from \cite{Xu}.
For any $x,y\in \mR^d$, Hypothesis $\textbf{LC}$  is used to  ensure that the processes $(X_t(x), X_t(y))$ enter a ball $\overline{B}_{R_*}=\{u\in \mR^d\times \mR^d, |u|\leq R_*\}$  very quickly, see Lemmas \ref{23}, \ref{36} below for more   details. Denote $\tau$ the time of the two    processes $(X_t(x), X_t(y))$ enter this   ball $\overline{B}_{R_*}$.

Hypothesis $\textbf{H}_1$ is used to   ensure  that
  \begin{eqnarray*}
  \mP\left(
  \begin{split}& |X_{\tau+T}(x)-X_{\tau+T}(y)|~ \text{is very small}
   \\ &\text{and}  ~X_{\tau+T}(x), X_{\tau+T}(y) \text{ stay  at some ball}
  \end{split}
    \right)>0
\end{eqnarray*}
 holds  for some big but finite $T$.
 Denote $u_x=X_{\tau+T}(x), u_y=X_{\tau+T}(y).$

  By  Hypothesis $\textbf{H}_2$, one  finds a  $t_0>0$ such that (see Lemma \ref{2} below for more details),
  \begin{eqnarray*}
    \|P_{t_0}(u_x,\cdot)-P_{t_0}(u_y,\cdot)\|_{var}=\min \mP(Z_1\ne Z_2)<1,
  \end{eqnarray*}
  the minimum is taken over all couplings $(Z_1,Z_2)$ of $(P_{t_0}(u_x,\cdot),P_{t_0}(u_y,\cdot)).$ Then,  there is a coupling $(Z_1,Z_2)$ of $ (X_{t_0}(u_x),(X_{t_0}(u_y))$ such that
   \begin{eqnarray*}
 \mP(Z_1= Z_2)>0,
  \end{eqnarray*}
  which implies  with a positive probability, the coupling time is $\tau+T+t_0.$

Based on the above arguments   and by the strong Markov property,
 we obtain  a estimate  of  the coupling time   of $X_t(x)$ and $X_t(y)$ (Lemma \ref{1} below)  and   complete the proof of Theorem \ref{9}.

\section{Proof of Theorem \ref{9}}
This section is devoted to prove  the  general criterion.
Throughout this section, we assume   that
Hypotheses  $\textbf{LC},\textbf{H}_1$  and  $\textbf{H}_2$ hold.
\subsection{ Construction of the coupling Markov chain and some lemmas}

Let   $(\Lambda_1, \Lambda_2)$ be two probability measures on a metric space $(\mR^d,\sB(\mR^d))$.
 We say $(Z_1,Z_2)$
is a coupling of $(\Lambda_1, \Lambda_2)$ if $\Lambda_1=\cD(Z_1), \Lambda_2=\cD(Z_2).$

We first  recall a fundamental result in the  coupling methods.
\begin{lem}(\cite{L1} etc.)
\label{3}
  Let $(\Lambda_1, \Lambda_2)$ be two probability measures on a metric space $(\mR^d,\sB(\mR^d))$.
  Then
  \begin{eqnarray*}
    \|\Lambda_1-\Lambda_2\|_{var}=\min \mP(Z_1\ne Z_2).
  \end{eqnarray*}
  The minimum is taken over all couplings $(Z_1,Z_2)$ of $(\Lambda_1,\Lambda_2).$ There exists a coupling
  which reaches the minimum value. It is called a maximal coupling.
\end{lem}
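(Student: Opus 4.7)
The plan is to prove the equality by establishing two matching inequalities: the ``coupling inequality'' $\|\Lambda_1-\Lambda_2\|_{var}\le \mP(Z_1\neq Z_2)$ for every coupling, and the existence of a specific coupling that achieves this bound. The former is a one-line manipulation; the latter requires an explicit construction based on the Hahn--Jordan decomposition of $\Lambda_1-\Lambda_2$, which simultaneously proves that the infimum is attained (so ``$\min$'' is justified).

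For the lower bound, I would fix any coupling $(Z_1,Z_2)$ and any Borel set $A\in\sB(\mR^d)$, then write
\begin{eqnarray*}
\Lambda_1(A)-\Lambda_2(A) &=& \mP(Z_1\in A)-\mP(Z_2\in A) \\
&=& \mP(Z_1\in A,\,Z_2\notin A)-\mP(Z_2\in A,\,Z_1\notin A) \\
&\leq& \mP(Z_1\in A,\,Z_2\notin A)\;\leq\;\mP(Z_1\neq Z_2).
\end{eqnarray*}
Swapping the roles of $\Lambda_1,\Lambda_2$ gives the same bound for $\Lambda_2(A)-\Lambda_1(A)$, so taking the supremum over $A$ yields $\|\Lambda_1-\Lambda_2\|_{var}\le \mP(Z_1\neq Z_2)$.

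For the construction of the maximal coupling, let $\lambda=\Lambda_1+\Lambda_2$ and write $f_i=\dif\Lambda_i/\dif\lambda$. Define the common (sub-probability) measure $\nu$ by $\dif\nu=(f_1\wedge f_2)\,\dif\lambda$ and set $p=\nu(\mR^d)$. A standard computation using Scheff\'e's identity shows $p=1-\|\Lambda_1-\Lambda_2\|_{var}$, and the residual sub-probability measures $\eta_i$ with densities $(f_i-f_j)^+$ (for $\{i,j\}=\{1,2\}$) are mutually singular and each have mass $1-p$. Now build $(Z_1,Z_2)$ on a suitable probability space: draw a Bernoulli variable $B$ with $\mP(B=1)=p$, and
\begin{itemize}
\item if $B=1$, draw $Z$ from $\nu/p$ and set $Z_1=Z_2=Z$;
\item if $B=0$, draw $Z_1$ from $\eta_1/(1-p)$ and independently $Z_2$ from $\eta_2/(1-p)$.
\end{itemize}
A marginal check shows $\Lambda_i=p\cdot(\nu/p)+(1-p)\cdot(\eta_i/(1-p))=\Lambda_i$ as required, while by the mutual singularity of $\eta_1,\eta_2$ we have $Z_1\neq Z_2$ almost surely on $\{B=0\}$; hence $\mP(Z_1\neq Z_2)=1-p=\|\Lambda_1-\Lambda_2\|_{var}$. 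Combined with the coupling inequality, this coupling attains the infimum, so the infimum is in fact a minimum and the lemma is proved.

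The only subtle step is the existence argument: one must verify carefully that $\nu$ and the $\eta_i$'s are well defined independently of the reference measure, that $\eta_1\perp\eta_2$ (which follows because $(f_1-f_2)^+$ and $(f_2-f_1)^+$ have disjoint supports), and that the identity $p=1-\|\Lambda_1-\Lambda_2\|_{var}$ holds; the latter follows from $\|\Lambda_1-\Lambda_2\|_{var}=\tfrac12\int|f_1-f_2|\,\dif\lambda$ together with $|f_1-f_2|=(f_1+f_2)-2(f_1\wedge f_2)$. Everything else is bookkeeping, and since the result is classical (the reference \cite{L1} already suffices), I would keep the write-up concise and simply check these two measure-theoretic identities explicitly.
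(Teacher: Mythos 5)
The paper offers no proof of this lemma at all: it is stated as a recalled classical fact with only the citation to Lindvall's book, so there is nothing in the text to compare your argument against. Your write-up is the standard (and correct) proof of that classical fact --- the coupling inequality in one direction, and the explicit maximal coupling built from the common part $f_1\wedge f_2$ and the mutually singular residuals in the other --- and it is consistent with the paper's normalization $\|\mu\|_{var}=\sup_{\Gamma}|\mu(\Gamma)|$, which for the difference of two probability measures equals $\tfrac12\int|f_1-f_2|\,\dif\lambda$ as you use; the only bookkeeping left implicit is the trivial adjustment of the construction in the degenerate cases $p=0$ and $p=1$.
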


Now,  we list some lemmas which will be used in the proof of Theorem \ref{9}.
\begin{lem}
\label{23}
(\romannumeral1)  For any $x\in \mR^d$ and $k\in \mN$, we have
  \begin{eqnarray*}
    \mE\[V(X_{kt_*}(x))\]\leq \alpha^kV(x)+\beta \frac{1}{1-a},
  \end{eqnarray*}
  where  $\alpha,\beta $  and $t_*$ are as in  Hypothesis $\textbf{LC}.$

(\romannumeral2)  There exist    positive constants  $R_*,\beta_*$ and $\alpha_*\in(0,1)$ such that for any $k\in\mN,$
\begin{eqnarray*}
\mE\[V(X_{kt_*}(x))+V(X_{kt_*}(y))\]& \leq & \alpha_* \[V(x)+V(y)\], \ \ \text{for  }~~|(x,y)|\geq R_*,
\\   \mE\[V(X_{kt_*}(x))+V(X_{kt_*}(y))\]& \leq& \beta_*, \ \ \ \ \ \  \ \ \ \ \ \  \text{for  }~~ |(x,y)|\leq R_*.
\end{eqnarray*}
\end{lem}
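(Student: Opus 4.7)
The plan is to prove part (i) by a straightforward induction using $\textbf{LC}$ and the semigroup identity, and then to deduce part (ii) by adding the two inequalities from (i) and exploiting coercivity of $V$.

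For part (i), I will argue by induction on $k$. The base case $k=1$ is exactly the Lyapunov inequality in $\textbf{LC}$. For the inductive step, the semigroup identity $P_{(k+1)t_*} = P_{kt_*}\circ P_{t_*}$ together with $\textbf{LC}$ and linearity/monotonicity of $P_{kt_*}$ give
\[
P_{(k+1)t_*}V(x) \;=\; P_{kt_*}(P_{t_*}V)(x) \;\leq\; \alpha\, P_{kt_*}V(x) + \beta,
\]
and substituting the inductive bound on $P_{kt_*}V(x)$ produces the geometric sum $\beta\sum_{j=0}^{k}\alpha^{j} \leq \beta/(1-\alpha)$, which finishes the step.

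For part (ii), applying (i) separately at $x$ and at $y$ and adding yields
\[
\mE\bigl[V(X_{kt_*}(x)) + V(X_{kt_*}(y))\bigr] \;\leq\; \alpha^{k}\bigl[V(x)+V(y)\bigr] + \frac{2\beta}{1-\alpha}.
\]
I will fix any $\alpha_* \in (\alpha,1)$, for instance $\alpha_* = (1+\alpha)/2$. Coercivity of $V$ guarantees that $\inf_{|(u,v)|\geq R}[V(u)+V(v)] \to \infty$ as $R\to\infty$, so I can choose $R_*$ large enough that $V(x)+V(y) \geq \frac{2\beta/(1-\alpha)}{\alpha_*-\alpha}$ whenever $|(x,y)|\geq R_*$. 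Combined with the uniform bound $\alpha^{k} \leq \alpha < \alpha_*$ for every $k\geq 1$, the displayed inequality immediately yields the first bound of (ii). For the complementary region $|(x,y)|\leq R_*$, local boundedness of $V$ gives $M_* := \sup_{|u|\leq R_*} V(u) < \infty$, and the same displayed inequality produces the second bound with, e.g., $\beta_* := 2M_* + \frac{2\beta}{1-\alpha}$.

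I do not anticipate any genuine obstacle here — it is a classical iteration-plus-coercivity argument. The single conceptual point worth noting is that one fixed pair $(\alpha_*,R_*)$ must serve for all $k\geq 1$ simultaneously; this works precisely because $\alpha^{k}$ is uniformly dominated by $\alpha$ for $k\geq 1$, so the margin $\alpha_* - \alpha$ absorbs the remainder $2\beta/(1-\alpha)$ at every iterate at once.
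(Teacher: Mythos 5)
Your proposal is correct and follows essentially the same route as the paper: iterate the Lyapunov inequality via the Markov property to get the geometric bound in (i), then add the two copies and use $\alpha\in(0,1)$ together with the coercivity $\lim_{|u|\to\infty}V(u)=+\infty$ to extract $\alpha_*$, $R_*$ and $\beta_*$ for (ii). You merely make explicit the choices ($\alpha_*=(1+\alpha)/2$, the threshold for $R_*$, and $\beta_*$) that the paper leaves implicit, including the tacit use of local boundedness of $V$ on $\overline{B}_{R_*}$ for the second bound, which the paper relies on as well.
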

\begin{proof}
With the help of   Hypothesis  $\textbf{LC},$ we obtain
    \begin{eqnarray}
    \nonumber  \mE\[V(X_{kt_*}(x))\]&=&  \mE\[ \mE\big[V(X_{kt_*}(x))|\cF_{(k-1)t_*}\big]\]
    \\ \nonumber   &\leq & \alpha \mE \[V(X_{(k-1)t_*}(x))\]+\beta,
    \\ \nonumber  &\leq & \alpha \[ a\mE \big[V(X_{(k-2)t_*}(x))\big]+\beta\]+\beta
    \\ \nonumber   &=& \alpha^2 \mE \big[V(X_{(k-2)t_*}(x))\big]+\beta (\alpha+1)
    \\ \nonumber  &\leq &\cdots
    \\ \nonumber   &\leq &  a^k \mE \big[V(X_{(k-k)t_*}(x))\big]+\beta(\alpha^{k-1}+\alpha^{k-2}+\cdots+1)
     \\ \nonumber   &= &  \alpha^k V(x)+\beta (\alpha^{k-1}+\alpha^{k-2}+\cdots+1)
     \\  \label{35}&\leq & \alpha^kV(x)+\beta \frac{1}{1-\alpha},
  \end{eqnarray}
  which gives  (\romannumeral1).

(\ref{35}) also implies that,
\begin{eqnarray*}
&& \mE\[V(X_{kt_*}(x))\]+\mE\[V(X_{kt_*}(y))\]\leq \alpha^k(V(x)+V(y))+\beta \frac{2}{1-\alpha}.
\end{eqnarray*}
Noticing that  $\alpha\in (0,1)$ and
\begin{eqnarray*}
  \lim_{|u|\rightarrow \infty}V(u)=+\infty,
\end{eqnarray*}
we get  the desired    result  (\romannumeral2).
\end{proof}

  \begin{lem}
\label{2}
Let      $R_*$   be as in     Lemma \ref{23}.
There exists $T_0=T_0(R_*)>1$ such that
for any $x,y\in \overline{B}_{R_*}$ and $t>T_0$,
  \begin{eqnarray}\label{31}
\|P_t(x,\cdot)-P_t(y,\cdot)\|_{var}=\sup_{0\leq f\leq 1}\[ P_tf(x)-P_tf(y)\]<1.
  \end{eqnarray}
Moreover, we have
    \begin{eqnarray}
 \label{38}p:&=&\sup_{|x|\leq R_*,|y|\leq R_*}\|P_t(x,\cdot)-P_t(y,\cdot)\|_{var}<1.
 \end{eqnarray}
\end{lem}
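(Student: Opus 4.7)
The plan is to establish the pointwise strict inequality by a two-step coupling argument: use $\textbf{H}_1$ to bring the two marginals close together inside a fixed compact ball, and then apply $\textbf{H}_2$ in the form of uniform continuity to contract the remaining total variation distance. The uniform bound then follows from continuity plus compactness.

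First I would apply $\textbf{H}_1$ with $R=R_*$ to extract the radius $R_0=R_0(R_*)$; importantly this depends only on $R_*$ and not on the $\delta$ to be chosen later. Since $\textbf{H}_2$ says that $z\mapsto P_1(z,\cdot)$ is continuous into the space of probability measures equipped with $\|\cdot\|_{var}$, and $\overline{B}_{R_0}$ is compact, this map is uniformly continuous on $\overline{B}_{R_0}$. I would then pick $\delta>0$ small enough that
\begin{equation*}
\|P_1(z_1,\cdot)-P_1(z_2,\cdot)\|_{var}<\tfrac{1}{2} \quad \text{whenever}\ z_1,z_2\in\overline{B}_{R_0},\ |z_1-z_2|\leq\delta.
\end{equation*}

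Next I would apply $\textbf{H}_1$ a second time, now with this particular $\delta$, to obtain $T^*:=T_0(R_*,\delta)$ such that for every $t\geq T^*$ and every $x,y\in\overline{B}_{R_*}$ there is a coupling $\pi_{x,y}^t\in\Gamma(P_t^*\delta_x,P_t^*\delta_y)$ assigning strictly positive mass $q(x,y,t)>0$ to the event $E:=\{(x',y'):|x'-y'|\leq\delta,\ x',y'\in\overline{B}_{R_0}\}$. Writing $(U_x,U_y)\sim\pi_{x,y}^t$, the semigroup identity $P_{t+1}f(z)=\mE[P_1f(U_z)]$ combined with the elementary exchange $\sup_f\mE[\cdot]\leq\mE\sup_f[\cdot]$ gives
\begin{equation*}
\|P_{t+1}(x,\cdot)-P_{t+1}(y,\cdot)\|_{var}\leq \mE\|P_1(U_x,\cdot)-P_1(U_y,\cdot)\|_{var}\leq 1-\tfrac{1}{2}q(x,y,t)<1,
\end{equation*}
where the integrand is bounded by $\tfrac{1}{2}$ on $E$ and trivially by $1$ off $E$. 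This establishes \eqref{31} with $T_0:=T^*+1$.

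For the uniform bound \eqref{38}, I would observe that $\textbf{H}_2$ makes the map $(x,y)\mapsto\|P_t(x,\cdot)-P_t(y,\cdot)\|_{var}$ jointly continuous on $\mR^d\times\mR^d$, since $x\mapsto P_t(x,\cdot)$ is continuous into the space of signed measures under $\|\cdot\|_{var}$. As $\overline{B}_{R_*}\times\overline{B}_{R_*}$ is compact and this continuous function is strictly less than $1$ at every point by the previous paragraph, the supremum is attained and $p<1$ follows. The main subtlety is that $\textbf{H}_1$ only furnishes \emph{existence} of a coupling with positive mass on $E$, not a uniform positive lower bound on $q(x,y,t)$ as $(x,y)$ ranges over $\overline{B}_{R_*}\times\overline{B}_{R_*}$; this is precisely why the continuity/compactness step is indispensable for passing from pointwise to uniform strict inequality.
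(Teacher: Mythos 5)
Your proposal is correct and follows essentially the same route as the paper: fix $R_0$ from $\textbf{H}_1$ first, use $\textbf{H}_2$ (with compactness of $\overline{B}_{R_0}$) to choose $\delta$ so that $P_1$ nearly couples points within $\delta$ of each other, then feed that $\delta$ back into $\textbf{H}_1$ and split the resulting coupling integral over the good set and its complement, exactly as in the paper's estimate $\tfrac{1}{2}\pi(\Theta)+\pi(\Theta^c)<1$; the uniform bound \eqref{38} is likewise obtained there by continuity plus compactness. The only differences are cosmetic (you land at time $t+1$ rather than decomposing $P_t$ through $P_{t-1}$, and you make the uniform-continuity step explicit where the paper merely asserts the existence of $\delta$).
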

\begin{proof}
Since   (\ref{38})
can be  directly obtained by     Hypothesis    $\textbf{H}_2$, (\ref{31})  and  the compactness of the set $\{(x',y')\in \mR^d\times \mR^d: |x'|\leq R_*, |y'|\leq R_*\} $,
we only prove   (\ref{31})  here.

By Hypothesis  $\textbf{H}_1$,  for any $\delta>0$,    there exist $R_0=R_0(R_*),T_0=T_0(R_*,\delta)>1$ such that  for any $x,y\in \overline{B}_{R_*}$ and $t>T_0$
  \begin{eqnarray}\label{47}
  \sup_{\pi\in \Gamma(P_{t-1}^*\delta_{x}, P_{t-1}^*\delta_{y})}\pi\left\{(u,v)\in \mR^d\times \mR^d, |u-v| \leq \delta, u,v\in \overline{B}_{R_0}\right\}>0.
\end{eqnarray}
We emphasis that  $R_0$ is independent of $\delta,$ and the constant  $\delta$
will be given in the next paragraph.

  By  Hypothesis  $\textbf{H}_2$, there exists a constant   $\delta>0$, such that
     \begin{eqnarray}\label{24}
\sup_{0\leq f\leq 1}\[P_1f(u)-P_1f(v)\]\leq \frac{1}{4}
  \end{eqnarray}
  holds for  any  $u,v\in \overline{B}_{R_0}$ with $|u-v|<\delta$.

For any
$\pi \in \Gamma(P_{t-1}^*\delta_x, P_{t-1}^*\delta_y)$ and  $f\in \sB_b(\mR^d)$ with $
\|f\|_\infty \leq 1,$  we have
  \begin{eqnarray}
   && \nonumber  P_tf(x)-P_tf(y)
   \\\nonumber  &&=  \int_{\mR^d\times \mR^d}\pi(\dif u,\dif v)\[P_{1}f(u)-P_{1}f(v)\]
   \\ \nonumber &&= \int_{\Theta}\pi(\dif u,\dif v)  \big[P_{1}f(u)-P_{1}f(v)\big]+\int_{\Theta^c}\pi(\dif u,\dif v)  \big[P_{1}f(u)-P_{1}f(v)\big]
   \\ \label{x-8}&&\leq \frac{1}{2}\pi(\Theta)+\pi(\Theta^c)
  \end{eqnarray}
  where
  $\Theta=\{(u,v)\in \mR^d\times \mR^d, |u-v| \leq \delta, u,v\in B_{R_0}\},$
  $\Theta^c=(\mR^d\times \mR^d)\setminus \Theta.$
Combining    (\ref{47})-(\ref{x-8}),
we obtain  (\ref{31}).

\end{proof}

 Let $T=\[\frac{T_0+1}{t_*}+1\]\cdot t_*:=k_0\cdot t_*$,   where $[b]$ denotes an integer with   $b-1< [b]\leq b,$ $t_*$  is as   in  Hypothesis  $\textbf{LC}$ and $T_0$ is given by Lemma \ref{2},
 then $T>T_0+1$. We  denote   by $P_{x,y}(\cdot)$   the law of   the maximal coupling of $P_T(x,\cdot)$ and $P_T(y,\cdot).$
%It follows from the Skorokhod representation theorem that
Then, there exist a stochastic basis
 $(\ti{\Omega},\ti{\cF},\ti{\mP})$ and  on this   basis,    a  $\mR^d\times \mR^d$
valued Markov chain $\{S(k)\}_{k\geq 0}$ with transition probability family
$\{P_{x,y}(\cdot), {(x,y)\in \mR^d\times \mR^d}\} $.
%  and, on this basis
% There exist  a filtered  probability space  $(\ti{\Omega},\ti{\cF},(\ti{\cF}_t)_{t\geq 0}$
% $,\ti{\mP})$
%and  a  $\mR^d\times \mR^d$
%valued Markov chain $\{S(k)\}_{k\geq 0}$ on this space
%
% \
%
%  There exist  a probability space $(\tilde{\Omega}, \tilde{\cF}, \tilde{\mP})$ and an $\mR^d\times \mR^d$
%valued Markov chain $\{S(k)\}_{k\geq 0}$ on $(\tilde{\Omega}, \tilde{\cF}, \tilde{\mP})$
%with transition probability family
%$P_{x,y}(\cdot)_{(x,y)\in \mR^d\times \mR^d}  $.
%
 Moreover, for every $(x,y)\in \mR^d\times \mR^d$, the marginal chains
$\{S^x(k)\}_{k\geq 0}$ and  $\{S^y(k)\}_{k\geq 0}$ have  the same distribution as $\{X_{kT}(x)\}_{k\geq 0}$ and  $\{X_{kT}(y)\}_{k\geq 0}$,  respectively.

The sequence   $(S(k))_{k\geq 0}$ constructed above is a Markov chain on the  probability space $(\tilde{\Omega},\tilde{\cF},\tilde{\mP})$
which is not necessarily the  same as $(\Omega,\cF,\mP).$ Without loss of generality, we assume that
\begin{eqnarray*}
 (\tilde{\Omega},\tilde{\cF}, \tilde{\mP})=(\Omega,\cF, \mP).
\end{eqnarray*}
Otherwise, we can consider the product space $(\tilde{\Omega}\times \Omega, \cF\times \tilde{\cF}, \tilde{\mP}\times \mP)$.

Define
\begin{eqnarray*}
 \tau_{x,y}&=&\inf\{m\geq 0, S^x(m)=S^y(m)\}.
\end{eqnarray*}
Let $\tau_0=0$ and  for any  $k\geq 1$, we define the stopping time  $\tau_k$ recursively by
\begin{eqnarray*}
  \tau_{k}&=&\inf\{m >\tau_{k-1}, |(S^x(m), S^y(m))|\leq R_* \}.
\end{eqnarray*}
%\begin{eqnarray*}
%\tau_1&=&\inf\{m >0, |(S^x(m), S^y(m))|\leq R_* \},
%  \\   \tau_{k+1}&=&\inf\{m >\tau_k, |(S^x(m), S^y(m))|\leq R_* \},
%  \\ \tau_{x,y}&=&\inf\{m\geq 0, S^x(m)=S^y(m)\}.
%\end{eqnarray*}
% recursively
In the following lemmas,  we will  give some estimates  on these stopping times.
\begin{lem}
\label{36}
For some positive  constant $\theta,$ we have
  \begin{eqnarray*}
    \mE\[e^{\theta \tau_1}\] & \leq  & C(1+V(x)+V(y)),
    \\  \mE\[e^{\theta \tau_k}\] &\leq & C^k\[  1+V(x)+V(y)  \].
  \end{eqnarray*}
\end{lem}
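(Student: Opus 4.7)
The plan is to derive both bounds from a discrete-time geometric drift condition for the coupling chain $\{S(k)\}$, combined with the strong Markov property at the stopping times $\tau_k$.

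First I would observe that, since $T=k_0 t_*$, Lemma~\ref{23}(ii) applied with $k=k_0$ yields a one-step drift inequality for the $\mR^d\times\mR^d$-valued chain $S$. Writing $W(u,v):=V(u)+V(v)\geq 2$, this reads
\[
\mE\bigl[W(S(m+1))\,\big|\,\cF_m\bigr]\leq \alpha_* W(S(m))\quad\text{on }\{|S(m)|\geq R_*\},
\]
\[
\mE\bigl[W(S(m+1))\,\big|\,\cF_m\bigr]\leq \beta_*\quad\text{on }\{|S(m)|\leq R_*\},
\]
with $\alpha_*\in(0,1)$ and $\beta_*<\infty$. Setting $\theta:=-\log\alpha_*>0$, the goal is to run a standard Foster--Lyapunov argument on the super-level sets of $W$ to control the hitting time $\tau_1$ of $\overline{B}_{R_*}$, and then iterate via the strong Markov property.

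For the bound on $\tau_1$ when $|(x,y)|> R_*$, I would consider the stopped process $M_n:=e^{\theta(n\wedge\tau_1)}W(S(n\wedge\tau_1))$. On $\{n<\tau_1\}$ we have $|S(n)|>R_*$, so the first drift bound gives
\[
\mE[M_{n+1}\mid \cF_n]\leq e^{\theta(n+1)}\alpha_* W(S(n))=e^{\theta n}W(S(n))=M_n,
\]
and on $\{n\geq\tau_1\}$ trivially $M_{n+1}=M_n$. Hence $M_n$ is a nonnegative supermartingale, so $\mE[M_n]\leq W(x,y)$. Because $W\geq 2$, this yields $\mE[e^{\theta(n\wedge\tau_1)}]\leq \tfrac{1}{2}W(x,y)$, and monotone convergence gives $\mE[e^{\theta\tau_1}]\leq \tfrac{1}{2}W(x,y)\leq C(1+V(x)+V(y))$. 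For $|(x,y)|\leq R_*$ I would condition on $\cF_1$: either $\tau_1=1$, contributing $e^\theta$, or $|S(1)|>R_*$, in which case the strong Markov property combined with the previous case gives $\mE[e^{\theta(\tau_1-1)}\mid\cF_1]\leq \tfrac{1}{2}W(S(1))$; averaging and using the second drift bound $\mE[W(S(1))]\leq \beta_*$ yields $\mE[e^{\theta\tau_1}]\leq C$. The two cases combine to the advertised $\mE[e^{\theta\tau_1}]\leq C(1+V(x)+V(y))$.

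For the bound on $\tau_k$ I would induct on $k$. At $\tau_{k-1}$ we have $|S(\tau_{k-1})|\leq R_*$, and $\tau_k-\tau_{k-1}$ is the first return of the shifted Markov chain to $\overline{B}_{R_*}$. Hence, by the strong Markov property of $S$ and the uniform estimate from the $|(x,y)|\leq R_*$ case just established,
\[
\mE\bigl[e^{\theta(\tau_k-\tau_{k-1})}\bigm|\cF_{\tau_{k-1}}\bigr]\leq C,
\]
with $C$ independent of the starting point in $\overline{B}_{R_*}$. Multiplying by $e^{\theta\tau_{k-1}}$, taking expectations, and iterating gives $\mE[e^{\theta\tau_k}]\leq C^k(1+V(x)+V(y))$, possibly after enlarging $C$ to absorb the base step.

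The main subtlety, rather than an obstacle, will be transferring Lemma~\ref{23}(ii) (stated for $X_{kt_*}$) to a clean one-step drift for the $T$-skeleton chain $S$ with the \emph{same} constants $\alpha_*,\beta_*,R_*$; this is why one specifically uses $k=k_0$ and possibly enlarges $R_*$. Once the drift is in hand, the supermartingale/strong-Markov iteration is standard and the rest is routine.
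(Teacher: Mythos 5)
Your proof is correct, and its second half (conditioning at $\tau_{k-1}$, where $|S(\tau_{k-1})|\leq R_*$, to get $\mE[e^{\theta(\tau_k-\tau_{k-1})}\mid\cF_{\tau_{k-1}}]\leq C$ and then iterating) is essentially the same strong Markov argument the paper uses. The genuine difference is in the first inequality: the paper does not prove it, but simply invokes Lemma \ref{23} together with \cite[Proposition 3.1]{AS} (Shirikyan's exponential-moment bound for hitting times under a Lyapunov drift), whereas you reprove that estimate from scratch via the supermartingale $M_n=e^{\theta(n\wedge\tau_1)}W(S(n\wedge\tau_1))$ with $\theta=-\log\alpha_*$, plus a one-step conditioning to handle initial points inside $\overline{B}_{R_*}$ (needed since $\tau_1\geq 1$ by definition). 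Your route is therefore self-contained where the paper's is a citation, at the cost of having to check the one-step drift for the $T$-skeleton chain $S$; you do this correctly by taking $k=k_0$ in Lemma \ref{23}(ii) (the constants there are uniform in $k$, and since the drift bound involves only the marginal laws $P_T(S^x(m),\cdot)$ and $P_T(S^y(m),\cdot)$, it passes to the coupling $P_{x,y}$ without modification). Both approaches buy the same conclusion; yours makes explicit the Foster--Lyapunov mechanism that the cited proposition encapsulates.
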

\begin{proof}
Referring to  Lemma \ref{31}  and  \cite[Proposition 3.1]{AS},    we see that  there exists $\theta>0$ such that
\begin{eqnarray}\label{x-9}
  \mE\exp(\theta \tau_1)\leq C(1+V(x)+V(y)).
\end{eqnarray}
Let $u_x=S^x(\tau_{k-1})$ and $ u_y=S^y(\tau_{k-1})$.
By  the  strong Markov property and (\ref{x-9}),  we get
\begin{eqnarray*}
\mE\exp(\theta \tau_k)&=&\mE \[ \mE\exp(\theta \tau_k)\big| \cF_{\tau_{k-1}}
   \]
  \\
   &=&  \mE \[ \mE\exp(\theta \tau_k-\theta \tau_{k-1}+\theta \tau_{k-1})\big| \cF_{\tau_{k-1}}
   \]
   \\
   &=&  \mE\Bigg[\exp(\theta \tau_{k-1})\cdot   \mE\[\exp(\theta \tau_k-\theta \tau_{k-1})\big| \cF_{\tau_{k-1}}  \] \Bigg]
   \\ &\leq & \mE\[\exp(\theta \tau_{k-1})\cdot \(C+CV(u_x)+CV(u_y)\) \]
   \\ &\leq & C \mE\[\exp(\theta \tau_{k-1}) \],
\end{eqnarray*}
which gives the desired result.
\end{proof}

\begin{lem}
For any $k\in \mN,$ we have
  \begin{eqnarray*}
    \mP(\tau_{x,y}>\tau_k+1)\leq p^{k},
  \end{eqnarray*}
  where $p$ appears in  Lemma \ref{2}.
\end{lem}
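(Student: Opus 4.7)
The plan is to show that the event $\{\tau_{x,y}>\tau_k+1\}$ is contained in
$$A_k:=\bigcap_{j=1}^{k}\{S^x(\tau_j+1)\neq S^y(\tau_j+1)\},$$
and then prove $\mP(A_k)\leq p^k$ by induction on $k$. The inclusion is immediate from the definition of $\tau_{x,y}$ as the first coincidence time: if $\tau_{x,y}>\tau_k+1$ then $S^x(m)\neq S^y(m)$ for every $m\leq\tau_k+1$, so in particular for $m=\tau_j+1$ with $j\leq k$, using that the $\tau_j$'s are strictly increasing integers, hence $\tau_j+1\leq\tau_{j+1}\leq\tau_k$.

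The inductive step exploits the strong Markov property at $\tau_k$ together with the fact that the chain $S$ is built from the maximal coupling transition kernel $P_{x,y}$. Conditionally on $\cF_{\tau_k}$, the pair $(S^x(\tau_k+1),S^y(\tau_k+1))$ is distributed as a maximal coupling of $(P_T(S^x(\tau_k),\cdot),P_T(S^y(\tau_k),\cdot))$, so by Lemma \ref{3},
$$\mP\big(S^x(\tau_k+1)\neq S^y(\tau_k+1)\,\big|\,\cF_{\tau_k}\big)=\|P_T(S^x(\tau_k),\cdot)-P_T(S^y(\tau_k),\cdot)\|_{var}.$$
The defining property $|(S^x(\tau_k),S^y(\tau_k))|\leq R_*$ forces both marginals into $\overline{B}_{R_*}$, and since $T>T_0+1>T_0$, Lemma \ref{2} bounds this TV distance almost surely by $p$. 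Because each set $\{S^x(\tau_j+1)\neq S^y(\tau_j+1)\}$ lies in $\cF_{\tau_j+1}\subseteq\cF_{\tau_k}$ for $j\leq k-1$, we have $A_{k-1}\in\cF_{\tau_k}$; conditioning then yields
$$\mP(A_k)=\mE\big[I_{A_{k-1}}\,\mP(S^x(\tau_k+1)\neq S^y(\tau_k+1)\mid\cF_{\tau_k})\big]\leq p\,\mP(A_{k-1}),$$
and the base case $\mP(A_1)\leq p$ is the same argument applied at $\tau_1$.

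The only delicate point is the interplay between the strong Markov property at the $\tau_k$ and the maximal coupling construction: one must verify that the conditional law of $S(\tau_k+1)$ given $\cF_{\tau_k}$ is exactly $P_{S^x(\tau_k),S^y(\tau_k)}$, so that Lemma \ref{3} can be invoked to identify the one-step disagreement probability with a total variation distance. Since $\tau_k<\infty$ a.s.\ by Lemma \ref{36}, the conditional probability above is well-defined on a set of full measure, and the induction closes to give the advertised bound $\mP(\tau_{x,y}>\tau_k+1)\leq\mP(A_k)\leq p^k$.
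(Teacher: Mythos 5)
Your proof is correct and follows essentially the same route as the paper: condition on the history of the chain up to $\tau_k$, use the fact that the one-step transition is the maximal coupling so that (by Lemma \ref{3}) the conditional disagreement probability at time $\tau_k+1$ equals $\|P_T(S^x(\tau_k),\cdot)-P_T(S^y(\tau_k),\cdot)\|_{var}\leq p$ from Lemma \ref{2}, and iterate. The only cosmetic difference is that you track disagreement only at the times $\tau_j+1$ rather than at every step up to $\tau_k+1$ as the paper does; the induction is otherwise identical.
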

\begin{proof}
By Lemmas \ref{3}, \ref{2}, we deduce that
\begin{eqnarray*}
 \mP(\tau_{x,y}> \tau_k+1) &\leq & \mP(S^x(k)\neq S^y(k),k=0,\cdots,\tau_k+1)
 \\ &=&  \mE\[  \mP(S^x(k)\neq S^y(k),k=0,\cdots,\tau_k+1| S^x(k), S^y(k), k=0,\cdots, \tau_k\]
 \\ &=&\mE\[I_{\{S^x(k)\neq S^y(k),k=0,\cdots,\tau_k\}}\mP\(S^x(\tau_k+1)\neq S^y(\tau_k+1)|S^x(\tau_k), S^y(\tau_k)\)\]
 \\&\leq & p\mP(S^x(k)\neq S^y(k),k=0,\cdots,\tau_k)
  \\&\leq &  p \mP(S^x(k)\neq S^y(k),k=0,\cdots,\tau_{k-1}+1)
  \\ &\leq &\cdots
  \\&\leq &  p^{k}.
 \end{eqnarray*}
\end{proof}

\begin{lem}
\label{1}
There exists   a constant  $\eps>0$, such that for any $x\neq y, $
\begin{eqnarray*}
\mE\[e^{\eps \tau_{x,y}}\]\leq C(1+V(x)+V(y)).
\end{eqnarray*}
\end{lem}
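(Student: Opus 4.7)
The plan is to write $e^{\eps \tau_{x,y}}$ as a sum over the annular events
\[
A_k := \{\tau_{k-1}+1 < \tau_{x,y} \leq \tau_k + 1\}, \qquad k\geq 0,
\]
with the convention $\tau_{-1}=-1$. On $A_k$ we have the pointwise bound $e^{\eps\tau_{x,y}} \leq e^{\eps} e^{\eps \tau_k}$, while the probability of the event itself is controlled by the previous lemma, since $A_k \subset \{\tau_{x,y} > \tau_{k-1}+1\}$, giving $\mathbb{P}(A_k)\leq p^{k-1}$ (with the convention $p^{-1}\leq 1/p$). Thus
\[
\mE[e^{\eps\tau_{x,y}}] \leq e^{\eps}\sum_{k\geq 0} \mE\bigl[e^{\eps\tau_k}\mathbf{1}_{A_k}\bigr].
\]

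Next I apply Hölder's inequality with conjugate exponents $r=r'=2$ to each summand:
\[
\mE\bigl[e^{\eps\tau_k}\mathbf{1}_{A_k}\bigr] \leq \mE\bigl[e^{2\eps\tau_k}\bigr]^{1/2}\,\mathbb{P}(A_k)^{1/2} \leq \mE\bigl[e^{2\eps\tau_k}\bigr]^{1/2}\, p^{(k-1)/2}.
\]
To control $\mE[e^{2\eps\tau_k}]$ I invoke Lemma \ref{36}, which gives $\mE[e^{\theta\tau_k}] \leq C^k(1+V(x)+V(y))$ for a fixed $\theta>0$. I pick $\eps$ so small that $2\eps\leq\theta$, and then Jensen's inequality yields
\[
\mE\bigl[e^{2\eps\tau_k}\bigr] \leq \mE\bigl[e^{\theta\tau_k}\bigr]^{2\eps/\theta} \leq \bigl(C^k(1+V(x)+V(y))\bigr)^{2\eps/\theta}.
\]
Since $1+V(x)+V(y)\geq 1$ and $2\eps/\theta \leq 1$, the factor $(1+V(x)+V(y))^{2\eps/\theta}$ is dominated by $1+V(x)+V(y)$.

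Combining these estimates,
\[
\mE[e^{\eps\tau_{x,y}}] \leq e^{\eps}\bigl(1+V(x)+V(y)\bigr)^{1/2} p^{-1/2}\sum_{k\geq 0}\bigl(C^{\eps/\theta} p^{1/2}\bigr)^{k}.
\]
The main (and essentially only) obstacle is that $Cp$ is not a priori less than $1$, so a naive Cauchy--Schwarz without Jensen would not close the geometric sum. The Jensen step is exactly what lets us shrink the effective growth rate from $C$ to $C^{\eps/\theta}$, and by choosing $\eps>0$ small enough (and independent of $x,y$) so that both $2\eps\leq\theta$ and $C^{\eps/\theta}p^{1/2}<1$, the series converges to a finite constant. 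This yields the desired bound $\mE[e^{\eps\tau_{x,y}}]\leq C(1+V(x)+V(y))$, completing the proof.
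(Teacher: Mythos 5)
Your proof is correct and takes essentially the same route as the paper's: the identical decomposition over the events $\{\tau_{k-1}+1<\tau_{x,y}\leq\tau_k+1\}$, H\"older's inequality on each summand, Lemma \ref{36} for the exponential moments of $\tau_k$, and the geometric bound $\mP(\tau_{x,y}>\tau_k+1)\leq p^k$. The only (cosmetic) difference is in how the factor $C^k$ is beaten so that the series converges: the paper keeps $\eps\leq\theta$ fixed and takes the H\"older exponent $p'$ large, so that $C^{k/p'}p^{k/q'}$ is summable, whereas you fix Cauchy--Schwarz and instead shrink $\eps$, using Jensen to turn the small $\eps$ into the small exponent on $C^k$ --- both tunings close the geometric sum in the same way.
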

\begin{proof}
Denote  $\tau_{-1}\equiv-1.$
For any $\eps>0$ and $p'>1,q'>1$ with $\frac{1}{p'}+\frac{1}{q'}=1,$ we have
\begin{eqnarray*}
\mE\[e^{\frac{\eps}{p'} \tau_{x,y}}\]&=& \sum_{k=0}^{\infty}\mE\[e^{\frac{\eps}{p'} \tau_{x,y}}I_{\{\tau_{k-1}+1< \tau_{x,y}\leq \tau_k+1\}}\]
\\ &\leq & \sum_{k=0}^{\infty}\[ \mE  e^{ \eps (\tau_{k}+1)}\]^{1/p'}\mP\(\tau_{x,y}>\tau_{k-1}+1 \)^{1/q'}
\\ &\leq & \sum_{k=0}^{\infty} C \[ \mE  e^{ \eps (\tau_{k}+1)}\]^{1/p'}  p^{k/q'}.
\end{eqnarray*}
Setting    $\eps \leq \theta$ and $p'$ big enough,  by Lemma \ref{36}    we derive  that
\begin{eqnarray*}
\mE\[e^{\frac{\eps}{p'} \tau_{x,y}}\]&\leq & \sum_{k=1}^{\infty}  C\cdot C_1^{k/p'}\[  1+CV(x)+CV(y)  \]^{1/p'}\cdot  p^{k/q'}
\\ &\leq & C(1+V(x)+V(y)).
\end{eqnarray*}
\end{proof}

\subsection{Proof of Theorem \ref{9} }
\begin{proof}
By Lemma \ref{1}, for some $\theta>0$ and any   $ f\in \sB_b(\mR^d)$ with $\|f\|_{\infty}\leq 1$, we obtain
\begin{eqnarray*}
  \mE\[f(S^x(k))-f(S^y(k))\]\leq C e^{-\theta k}(1+V(x)+V(y)).
\end{eqnarray*}
Since the marginal chains   $\{S^x(k)\}_{k\geq 0},$ $\{S^y(k)\}_{k\geq 0}$ have  the same distribution as $\{ X_{kT}(x) \}_{k\geq 0}$,$\{ X_{kT}(y) \}_{k\geq 0}$  respectively,     we have
\begin{eqnarray}\label{46}
  \mE\[f( X_{kT}(x) )-f( X_{kT}(y) )\]\leq C e^{-\theta k}(1+V(x)+V(y)).
\end{eqnarray}
For any $t>0$ and $f$ with  $\|f\|_\infty\leq 1$,  we  set $k_t=[\frac{t}{T}],\ti{f}=P_{t-k_tT}f$.
By (\ref{46}),  we get
\begin{eqnarray*}
P_tf(x)-P_tf(y)&=& \mE\[P_{t-k_tT}f( X_{k_tT}(x))-P_{t-k_tT}f(X_{k_tT}(y))\]
\\ &=& \mE\[\ti{f}(X_{k_tT}(x))-\ti{f}(X_{k_tT}(y))\]
\\ &\leq &  C e^{-\theta k_t}(1+V(x)+V(y))
\\  &\leq &  C e^{-\theta \frac{t}{T}}(1+V(x)+V(y)).
\end{eqnarray*}
Hence, for some $\theta>0,$ we obtain
\begin{eqnarray}\label{15}
P_tf(x)-P_tf(y)\leq  C\[1+V(x)+V(y)\]e^{-\theta t}.
\end{eqnarray}
According to  \cite[Section 2.2]{AS},  (\ref{15}) implies  (\ref{16}) which finishes the proof of   Theorem \ref{9}.
For the convenience of reading, we still give  its  details here.

Denote  by $\langle f, P_t(x,\cdot)\rangle=P_tf(x)$.
For any $s>t$,  it is not difficult to see that
\begin{eqnarray}
\nonumber
 |\langle f, P_t(x,\cdot)-P_s(y,\cdot)\rangle|&=&\left| \int_{\mR^n}P_{s-t}(y,dz)\int_{\mR^n}\(P_t(x,dv)-P_t(z,dv)\)f(v)\right|
 \\
 \nonumber
 &\leq &Ce^{-\theta t} \int_{\mR^n}P_{s-t}(y,dz)(1+V(x)+V(z))
 \\
 \label{5}
  &=& Ce^{-\theta t}\(1+V(x)+\mE_y \big[V(X_{s-t})\big]\).
\end{eqnarray}
By the Prokhorov theorem, $\cP(\mR^d)$ is a complete metric space under the weak topology (here $\cP(\mR^d)$ denotes the set of all probability measures on  $\mR^d$).
Let $y=0$ and  $s\rightarrow +\infty$ in  (\ref{5}),
  % for any  $f\in C_b(\mR^d)$ with $\|f\|_\infty \leq 1$,
one arrives at that
\begin{eqnarray*}
  |\langle f, P_{t}(x,\cdot)-\mu\rangle|\leq Ce^{-\theta t}(1+V(x)),
\end{eqnarray*}
which implies  (\ref{16}).
\end{proof}
% \section{A  proof of Theorem \ref{29}}

\section{Application}
 Consider the following stochastic differential equation driven by L\'evy Processes
\begin{eqnarray}\label{10}
\dif X_t = b(X_t)\dif t + A_1\dif W_t + A_2\dif L_t
, \ X_0 = x\in \mR^d,
\end{eqnarray}
where $b : \mR^d \rightarrow \mR^d$
is a smooth vector field, $A_1$ and $A_2$ are two constant $d \times d$-matrices, $(W_t)_{t\geq 0}$
is a
d-dimensional standard Brownian motion and $(L_t)_{t\geq 0}$
is a purely jump d-dimensional L\'evy process
with L\'evy measure $\nu(dz)$. Let $\Gamma_0 := \{z \in \mR^d
: 0 < |z| < 1\}$.

Throughout this section, we assume
that $\left.\frac{\nu(dz)}{dz}\right|_{\Gamma_0} = \kappa(z)$ satisfies the following conditions: for some $\alpha\in (0, 2)$ and $m\in\mN$,\\
$(\textbf{H}_m^\alpha):$
$ \kappa\in C^m(\Gamma_0,(0,\infty))$ is symmetric (i.e. $\kappa(-z) = \kappa(z)$) and satisfies the following Orey’s order condition (cf. \cite[Proposition 28.3]{Sato})
\begin{eqnarray*}
  \lim_{\eps\rightarrow 0} \eps^{\alpha-2}\int_{|z|\leq \eps}|z|^2\kappa(z)\dif z:=c_1>0,
\end{eqnarray*}
and bounded condition: for $j = 1,\cdots, m $ and some $C_j > 0,$
$$
|\nabla^j \log \kappa(z)|\leq C_j |z|^{-j}, z\in \Gamma_0.
$$

Let $A^*$ be  the transpose of $A$, and
$$
\nabla^2_{A_1A_1^*}f:=\sum_{i,j=1}^d (A_1A_1^*)_{ij}\partial_{ij}^2f,
$$
Let $B_0=\mI_{d\times d}$ be the identity matrix and define for $n \in \mN$,
\begin{eqnarray*}
  B_n(x):= b(x)\cdot \nabla B_{n-1}(x) - \nabla b(x) \cdot B_{n-1}(x) +\frac{1}{2}\nabla^2_{A_1A_1^*}B_{n-1}(x).
\end{eqnarray*}
Here and below $(\nabla b)_{ij}:=\partial_jb^i(x).$

Let $P_t(x,\cdot)$ be the transition probability associated with Eq.(\ref{10}).
We  claim that the following theorem holds.
\begin{thm}\label{29}
Assume $(\textbf{H}_1^\alpha)$  holds,  $\int_{\mR^d}|z|^2\nu(\dif z)<\infty$, and
\begin{itemize}
  \item[(1)] for some  $k>0,$
\begin{eqnarray}\label{51}
\langle x-y, b(x)-b(y)\rangle\leq -k|x-y|^2,
\end{eqnarray}
where $\langle \cdot,\cdot \rangle$ denotes the usual inner product on $\mR^d,$
  \item[(2)]   for any $x\in\mR^d,$ there exists some  $n=n(x)$ such that
  \begin{eqnarray}\label{52}
    \text{Rank}\[A_1,B_1(x)A_1,\cdots,B_n(x)A_1,A_2,B_2(x)A_2,\cdots,B_n(x)A_2\]=d,
  \end{eqnarray}
\end{itemize}
 then there exist  a  unique invariant probability measure $\mu$ for $P_t$ and a  positive   constant $\theta$ such that for any $x\in \mR^d$,
\begin{eqnarray*}
  \|P_t(x,\cdot)-\mu\|_{var}\leq Ce^{-\theta t}(1+|x|^2), \ \ \ \forall t\geq 0.
\end{eqnarray*}
\end{thm}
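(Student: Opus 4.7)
The plan is to apply Theorem \ref{9} to the semigroup $P_t$ generated by (\ref{10}), by verifying the three hypotheses $\textbf{LC}$, $\textbf{H}_1$, $\textbf{H}_2$ one at a time. The natural choice of Lyapunov function, suggested by the announced bound $1+|x|^2$, is $V(x)=1+|x|^2$, and the dissipativity (\ref{51}) will drive both the Lyapunov estimate and a synchronous coupling that yields the weak irreducibility $\textbf{H}_1$; the H\"ormander type hypothesis (\ref{52}) together with the Orey-order condition $(\textbf{H}_1^\alpha)$ will give $\textbf{H}_2$ via known gradient estimates for SDEs driven by L\'evy noise.

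For $\textbf{LC}$, I would apply It\^o's formula to $V(x)=1+|x|^2$. The diffusive and compensated jump parts produce a constant contribution $\mathrm{tr}(A_1A_1^*)+\int |A_2z|^2\nu(\dif z)$, which is finite by $\int|z|^2\nu(\dif z)<\infty$, while (\ref{51}) applied with $y=0$ gives $\langle x,b(x)\rangle\leq -k|x|^2+|x||b(0)|$. Using Young's inequality I obtain $\cL V(x)\leq -k V(x)+C$, and Gronwall yields $P_tV(x)\leq e^{-kt}V(x)+C/k$, which is $\textbf{LC}$ for any $t_*>0$. For $\textbf{H}_1$, I would take $X_t(x)$ and $X_t(y)$ to be driven by the \emph{same} Brownian motion $W$ and the \emph{same} L\'evy process $L$ (synchronous coupling); the difference $Z_t:=X_t(x)-X_t(y)$ then solves the pathwise ODE $\dot Z_t=b(X_t(x))-b(X_t(y))$, so (\ref{51}) gives $\tfrac{d}{dt}|Z_t|^2\leq -2k|Z_t|^2$ and hence
\begin{eqnarray*}
|X_t(x)-X_t(y)|\leq e^{-kt}|x-y|\quad \text{almost surely.}
\end{eqnarray*}
Given $R,\delta>0$ and $x,y\in\overline{B}_R$, choose $T_0$ so that $2R\,e^{-kT_0}<\delta$; then the first event $\{|X_t(x)-X_t(y)|\leq\delta\}$ has probability one under this coupling for $t\geq T_0$. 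From Step 1 the moments $\mE|X_t(x)|^2,\mE|X_t(y)|^2$ are uniformly bounded on $x,y\in\overline{B}_R$, so Chebyshev lets me pick $R_0=R_0(R)$ so that $\mP(X_t(x),X_t(y)\in\overline{B}_{R_0})\geq 1/2$, and intersecting the two events gives a coupling $\pi\in\Gamma(P_t^*\delta_x,P_t^*\delta_y)$ with strictly positive mass on the set required in $\textbf{H}_1$.

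For $\textbf{H}_2$ I would cite the gradient estimate $|\nabla P_tf(x)|\leq C_t(|x|)\|f\|_\infty$ established in \cite[Theorem 1.1]{SZ} (and referred to in the Remark below $\textbf{H}_2$), whose standing assumptions are precisely the Orey-type condition $(\textbf{H}_1^\alpha)$ and the rank condition (\ref{52}); by the Remark this gradient estimate implies $\textbf{H}_2$. With all three hypotheses verified, Theorem \ref{9} immediately gives a unique invariant measure $\mu$ and the exponential decay $\|P_t(x,\cdot)-\mu\|_{var}\leq Ce^{-\theta t}(1+V(x))=Ce^{-\theta t}(1+|x|^2)$. The main obstacle I expect is not any single computational step (each is short), but the conceptual point that synchronous coupling alone suffices: classical frameworks such as Meyn--Tweedie or $\textbf{H}_a$ would demand that the process actually reach every open set from every starting point, which L\'evy-driven SDEs with degenerate noise may fail. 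The dissipativity (\ref{51}) lets me trade this strong irreducibility for the much weaker closeness-with-positive-probability demanded by $\textbf{H}_1$, and this is exactly where the strength of Theorem \ref{9} over the classical criteria is exploited.
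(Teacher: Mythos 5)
Your overall strategy coincides with the paper's: verify $\textbf{LC}$ via It\^o's formula and the dissipativity (\ref{51}) applied at $y=0$, verify $\textbf{H}_1$ via the synchronous coupling giving the pathwise contraction $|X_t(x)-X_t(y)|^2\leq e^{-kt}|x-y|^2$ combined with the moment bound $\mE|X_t(x)|^2\leq e^{-kt}|x|^2+C$ (your Chebyshev step to produce $R_0$ fills in a detail the paper leaves implicit), and then invoke Theorem \ref{9}. These two verifications are essentially identical to the paper's.

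The gap is in your verification of $\textbf{H}_2$. You cite the gradient estimate of \cite[Theorem 1.1]{SZ} directly for the semigroup $P_t$ of equation (\ref{10}), asserting that its standing assumptions are "precisely" $(\textbf{H}_1^\alpha)$ and the rank condition (\ref{52}). That is not the case: such Malliavin-calculus gradient estimates also require the drift and its derivatives to be bounded, whereas here $b$ is only assumed smooth and one-sidedly dissipative, so it may have polynomial growth (e.g.\ $b(x)=-kx-x|x|^2$ satisfies (\ref{51})). The paper therefore does \emph{not} apply \cite{SZ} to (\ref{10}) itself; it introduces the truncated equation (\ref{x-5}) with drift $h_\ell b$ (compactly supported), obtains the continuity statement (\ref{z-2}) for the localized semigroup from the \emph{proof} of \cite[Theorem 1.1]{SZ}, and then transfers it to the original equation through the exit times $\tau_\ell(x)$, using the dedicated lemma $\limsup_{y\rightarrow x} I_{\{t>\tau_\ell(y)\}}\leq I_{\{t\geq\tau_{\ell-1}(x)\}}$ a.s.\ (which itself relies on the pathwise continuity in the initial datum furnished by the synchronous coupling) and finally letting $\ell\rightarrow\infty$. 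Without this localization step your argument as written does not establish $\textbf{H}_2$; with it, the rest of your proof goes through as in the paper.
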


Before  we give a proof of Theorem  \ref{29}, we give some remarks, notations and a lemma.

\begin{Rem}
According to \cite[Theorem 1.1]{SZ}, we know that if $(\textbf{H}_1^\alpha)$ and (\ref{52}) hold,  Hypothesis  $\textbf{H}_1$  also holds. Moreover, (\ref{51}) is used to ensure Hypotheses  $\textbf{LC}$, $\textbf{H}_2$ hold.
\end{Rem}
\begin{Rem}
When  $\|A_1^{-1}\|\leq \lambda $
for some $\lambda>0,$
here $A_1^{-1}$ denotes the inverse of $A_1$, Lan and Wu \cite[Theorem 1.3]{LW} proved that $X_t$ is irreducible aperiodic. However, generally speaking,  (\ref{52}) doesn't imply  $\textbf{H}_a.$
For this, one can see Example \ref{E-1}.
\end{Rem}

Let  $N$ be  a Poisson  random measure with density $\dif t\nu(\dif z)$, where $\nu(\dif z)=\kappa(z)\dif z$ and
let
\begin{align*}
 &  L_t=\int_0^t\int_{|z|<1}z\tilde{N}(\dif s\dif z)+\int_0^t\int_{|z|\geq 1}zN(\dif s\dif z),
  \\ &\tilde{N}(\dif t\dif z)=N(\dif t\dif z)-\dif s\nu(\dif z).
\end{align*}
Then, (\ref{10}) can be rewritten as
\begin{eqnarray*}
\left\{
\begin{split}
\dif X_t &=  b(X_t)\dif t + A_1\dif W_t + \int_{\mR^d}A_2z\tilde{N}(\dif s\dif z)+\int_{\mR^d}A_2zN(\dif s\dif z),
\\
X_0 &= x.
\end{split}
\right.
\end{eqnarray*}

For any $\ell\in\mN,$ let $h_\ell(x):\mR^d\rightarrow  \mR$ be a smooth function  with compact support and $h_\ell(x)=1$ when  $x\in \overline{B}_\ell.$ Let $X_t^\ell(x)$ be the solution to the following SDEs
\begin{eqnarray}\label{x-5}
\dif X_t^\ell =h_\ell(X_t^\ell) b(X_t^\ell)\dif t + A_1\dif W_t + A_2\dif L_t
, \ X^\ell_0 = x.
\end{eqnarray}
Following   the proof of \cite[Theorem 1.1]{SZ}, one obtains that for any $x\in B_\ell,$
\begin{eqnarray}\label{z-2}
  \lim_{y\rightarrow x}\sup_{\|f\|_{\infty}\leq 1}\mE\[f(X_t^\ell(y))-f(X_t^\ell(x))\]=0.
\end{eqnarray}
Define a sequence of stopping time
\begin{eqnarray*}
\tau_\ell(x)&=&\inf\{s>0,~X^\ell_s(x)\not\in B_\ell\},\ \ell\in \mN, x\in \mR^d,
\end{eqnarray*}
then the following lemma holds.

\begin{lem}
For any $x \in \mathbb{R}^{d},$ $l\geq 2,~t>0$, we have
\begin{eqnarray}\label{25}
\limsup_{y\rightarrow x} I_{\{t>\tau_\ell(x)\}}\leq I_{\{t\geq
\tau_{\ell-1}(x)\}},~ a.s.
\end{eqnarray}
\end{lem}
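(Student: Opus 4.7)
The statement quantifies how the exit time from the open ball $B_\ell$ depends on the initial datum: the left-hand side records the limiting behaviour of the indicators as $y\to x$, so I read the LHS as $\limsup_{y\to x} I_{\{t > \tau_\ell(y)\}}$ (the version with non-trivial content). The plan is to argue pathwise, and the crucial input is a continuity-in-initial-datum estimate for \eqref{x-5}. Because the driving noise $A_1\,dW_t + A_2\,dL_t$ does not depend on the state and the cut-off drift $h_\ell b$ is bounded and globally Lipschitz (as $h_\ell$ has compact support and $b\in C^\infty$), subtracting the equations for $X^\ell(y)$ and $X^\ell(x)$ removes both stochastic integrals, and a Gronwall argument yields, on an event $\Omega_0$ of full probability,
$$\sup_{s \in [0,T]} |X_s^\ell(y) - X_s^\ell(x)| \leq |y - x|\, e^{L_\ell T}, \qquad \forall\, T > 0,$$
where $L_\ell$ is the Lipschitz constant of $h_\ell b$. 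The rest of the argument is carried out $\omega$ by $\omega$ on $\Omega_0$.

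If the left-hand limsup is $0$ there is nothing to prove, so I assume it equals $1$ and select $y_n \to x$ with $\tau_\ell(y_n) < t$. Since $B_\ell$ is open and the paths are c\'adl\'ag, at the exit instant $|X^\ell_{\tau_\ell(y_n)}(y_n)| \geq \ell$. Extracting a subsequence I may assume $\tau_\ell(y_n) \to s^* \in [0,t]$; the uniform estimate above then forces $|X^\ell_{\tau_\ell(y_n)}(y_n) - X^\ell_{\tau_\ell(y_n)}(x)| \to 0$. A further extraction arranges that $\tau_\ell(y_n)$ approaches $s^*$ from a single side. In the case $\tau_\ell(y_n) \uparrow s^*$ strictly, the c\'adl\'ag property of $X^\ell(x)$ gives $X^\ell_{\tau_\ell(y_n)}(x) \to X^\ell_{s^*-}(x)$ with $|X^\ell_{s^*-}(x)| \geq \ell$, and the definition of left limit produces some $s < s^* \leq t$ with $|X^\ell_s(x)| > \ell - 1$, so $\tau_{\ell-1}(x) \leq s < t$. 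Otherwise $\tau_\ell(y_n) \geq s^*$ eventually (or approaches $s^*$ from the right), and then $X^\ell_{\tau_\ell(y_n)}(x) \to X^\ell_{s^*}(x)$ with $|X^\ell_{s^*}(x)| \geq \ell > \ell - 1$, giving $\tau_{\ell-1}(x) \leq s^* \leq t$. In both cases $I_{\{t \geq \tau_{\ell-1}(x)\}} = 1$, which is what is needed.

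The main obstacle is precisely the jump nature of $X^\ell$: one cannot keep $\tau_\ell$ on the right nor upgrade the $\geq$ to a strict $>$, because a jump at the limiting time $s^*$ may leave $X^\ell(x)$ just inside $\overline{B}_\ell$ while the nearby paths $X^\ell(y_n)$ have already exited. The drop $\ell \to \ell - 1$ supplies the geometric slack needed to absorb this jump ambiguity, and relaxing $>$ to $\geq$ absorbs the boundary behaviour of the convergence $\tau_\ell(y_n) \to s^*$; everything outside this case split is routine continuity inherited from the Gronwall bound in the first step.
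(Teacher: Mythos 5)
Your proof is correct and rests on the same two ingredients as the paper's: the pathwise Gronwall estimate $\sup_{s\leq t}|X^\ell_s(y)-X^\ell_s(x)|\leq e^{Ct}|y-x|$ (valid because the noise is additive and the cut-off drift is globally Lipschitz, so the stochastic integrals cancel) and the slack gained by dropping from $\ell$ to $\ell-1$; the paper argues by contradiction while you argue directly, and your case analysis on accumulation points $s^*$ of $\tau_\ell(y_n)$ is an unnecessary detour, since from $\sup_{s\in[0,t]}|X^\ell_s(y_n)|\geq\ell$ and the uniform closeness one gets $\sup_{s\in[0,t]}|X^\ell_s(x)|\geq\ell>\ell-1$ in one line. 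The only step you (like the paper) leave implicit is that $\tau_{\ell-1}(x)$ is defined through the \emph{different} process $X^{\ell-1}$, so passing from ``$|X^\ell_s(x)|>\ell-1$ for some $s\leq t$'' to ``$\tau_{\ell-1}(x)\leq t$'' requires the standard consistency fact that $X^{\ell-1}(x)$ and $X^{\ell}(x)$ coincide up to the first exit from $B_{\ell-1}$, where their coefficients agree.
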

\begin{proof}
%Let $\Gamma$ be a measurable set with $\mathbb{P}(\Gamma^c)=0$ and such that $X^{l}_s(x,\omega)$ is c\`{a}dl\`{a}g with respect to  $s$ for $\omega\in\Gamma$. For $\omega \in \Gamma$, t
The conclusion is apparent if
\begin{eqnarray*}
\limsup_{y\rightarrow x}I_{\{t>\tau_\ell(y)\}}(\omega)=0\text{ or }I_{\{t\geq \tau_{\ell-1}(x)\}}(\omega)=1.
\end{eqnarray*}
Assume that $\limsup\limits_{y\rightarrow x}I_{\{t>\tau_\ell(y)\}}(\omega)=1$ and $I_{\{t\geq \tau_{\ell-1}(x)\}}(\omega)=0$,
then
\begin{align*}
\sup_{s \in [0,t]} |X^{\ell-1}_s(x,\omega)| \leq  \ell-1.
\end{align*}
Therefore,
\begin{eqnarray}\label{26}
\sup_{s \in [0,t]} |X^{\ell}_s(x,\omega)| \leq  \ell-1.
\end{eqnarray}
Since $\limsup\limits_{y\rightarrow x}I_{\{t>\tau_\ell(y)\}}(\omega)=1$,   there exists  a sequence  $\{x_n\}\subset\mathbb{R}^{d}$ with
$x_n\rightarrow x$ as $n\rightarrow\infty$, such that for $n$ large enough
\begin{align}\label{27}
\sup_{s \in [0,t]}|X^{l}_s(x_{n},\omega)|\geq l.
\end{align}
On the other hand, by (\ref{x-5}),  we have
\begin{eqnarray*}
  \dif |X_t^\ell(x_n)-X_t^\ell(x)|\leq C |X_t^\ell(x_n)-X_t^\ell(x)|\dif t,
\end{eqnarray*}
and furthermore
\begin{eqnarray*}
  \sup_{s\in [0, t]} |X_s^\ell(x_n)-X_s^\ell(x)|\leq e^{Ct}|x_n-x|,
\end{eqnarray*}
which contradicts  (\ref{26})  and (\ref{27}).
\end{proof}

Now we are in a position  to give the proof of Theorem  \ref{29}.

\begin{proof}
Based on  Theorem  \ref{9}, we only need to verify  Hypotheses  $\textbf{LC}$, $\textbf{H}_1$ and $\textbf{H}_2$  respectively.

(\Rmnum{1}) Verification of    Hypothesis $\textbf{LC}$:
Using   It\^o's formula, we  have
\begin{eqnarray*}
  |X_t(x)|^2&=&|x|^2+2\int_0^t \langle  X_{s_-}^x, b(X_{s_-}) \rangle\dif s+2\int_0^t\langle X_{s_-}^x, A_1\dif W_s\rangle+\int_0^tTr(A_1\cdot A_1^*)\dif s
  \\ &&+\int_0^t\int_{|z|< 1}\[|X_{s_-}^x+A_2z|^2-|X_{s_-}^x|^2 \]\tilde{N}(\dif s\dif z)
  +\int_0^t\int_{|z|\geq  1}\[|X_{s_-}^x+A_2z|^2-|X_{s_-}^x|^2 \]N(\dif s\dif z)
  \\ &&+\int_0^t\int_{|z|< 1}\[ |X_{s_-}^x+A_2z|^2-|X_{s_-}^x|^2 -\sum_{i=1}^d(A_2z)_iX_i^x(s_-) \]\dif s\nu(\dif z).
\end{eqnarray*}
For any $\eps>0$,  one easily sees that
\begin{align}\label{28}
  \begin{split}
\langle X_{s_-}^x, A_2z \rangle& \leq \eps | X_{s_-}^x |^2+C(\eps) |z|^2, \forall \eps>0,
\\
\langle b(x),x\rangle&=\langle b(x)-b(0),x-0\rangle+\langle b(0),x\rangle\leq -k|x|^2+|b(0)|\cdot |x|.
  \end{split}
\end{align}
Then, we deduce that
\begin{eqnarray*}
|X_t(x)|^2
 &\leq &  |x|^2+2\int_0^t \[-k|X_{s_-}^x|^2+|b(0)|\cdot |X_{s_-}^x|\]\dif s+2\int_0^t\langle X_{s_-}^x, A_1\dif W_s\rangle+\int_0^tC(\eps) \dif s
\\ && +\int_0^t\int_{|z|< 1}\[|X_{s_-}^x+A_2z|^2-|X_{s_-}^x|^2 \]\tilde{N}(\dif s\dif z)
  +\int_0^t\int_{|z|\geq  1}\[|X_{s_-}^x+A_2z|^2-|X_{s_-}^x|^2 \]N(\dif s\dif z)
   \\ &&+\int_0^t\int_{|z|< 1}\[ \eps |X_{s_-}|^2+C(\eps)|z|^2\]\dif s\nu(\dif z).
\end{eqnarray*}
Setting  $\eps$ small enough and  combining  the above inequality   with (\ref{28}),  we deduce  that  for some $C=C(k),$
\begin{eqnarray*}
 \mE\[|X_t(x)|^2\] &&\leq  |x|^2+\int_0^t \[-\frac{3k}{2}\mE |X_{s_-}^x|^2+C\]\dif s
 +\int_0^t\int_{|z|\geq  1}\mE\[|X_{s_-}^x+A_2z|^2-|X_{s_-}^x|^2 \]\dif s\nu(\dif z)
\\ &&\leq  |x|^2+\int_0^t \[-\frac{3k}{2}\mE |X_{s_-}^x|^2+C\]\dif s
 +\int_0^t\int_{|z|\geq  1}\mE\[\langle X_{s_-}^x, A_2z\rangle +C|z|^2\]\dif s\nu(\dif z)
\\ && \leq   |x|^2+\int_0^t \[-\frac{3k}{2}|X_{s_-}^x|^2+C\]\dif s
 +\int_0^t\int_{|z|\geq  1}\[\frac{k}{2} \mE | X_{s_-}^x|^2+C|z|^2\] \dif s\nu(\dif z)
\\ && \leq |x|^2 -k\int_0^t \mE |X_{s_-}^x|^2 \dif s +\int_0^t C \dif s.
\end{eqnarray*}
By the   Gronwall's  inequality,  we obtain      that
\begin{eqnarray}\label{50}
\mE\[|X_t(x)|^2\]\leq |x|^2e^{-kt}+C,
\end{eqnarray}
which gives the desired result.

(\Rmnum{2}) Verification of    Hypothesis $\textbf{H}_1$:
By  calculating, we get
\begin{eqnarray*}
  \dif |X_t(x)-X_t(y)|^2&=&2 \langle b(X_t(x))-b(X_t(y)), X_t(x)-X_t(y)\rangle \dif t
  \\ &\leq & -2k|X_t(x)-X_t(y)|^2\dif t,
\end{eqnarray*}
which implies that
\begin{eqnarray}\label{z-1}
 |X_t(x)-X_t(y)|^2\leq |x-y|^2e^{-kt}.
\end{eqnarray}
In view  of  (\ref{50}) and (\ref{z-1}), Hypothesis $\textbf{H}_1$ holds.

(\Rmnum{3}) Verification of    Hypothesis $\textbf{H}_2$:
 The main  ideas in this verification are  borrowed    from the proof of \cite[Theorem 1.1]{SZ} and    \cite[Theorem 4.2]{DP1}.

 % \textbf{  verification of    Hypothesis $\textbf{H}_2$}:
For any $x,y\in B_\ell$, one  sees  that
\begin{eqnarray*}
&& |\mE[f(X_t(x))-f(X_t(y))]|
\\&&\leq|\mE[f(X_t(x))1_{t<\tau_\ell(x)}-f(X_t(y))1_{t<\tau_\ell(y)}]|+\|f\|_\infty\mP(t\geq \tau_\ell(x))+\|f\|_\infty\mP(t\geq \tau_\ell(y))
\\
&&\leq|\mE[f(X_t^\ell(x))1_{t<\tau_\ell(x)}-f(X_t^\ell(y))1_{t<\tau_\ell(y)}]|
+\|f\|_\infty\mP(t\geq \tau_\ell(x))+\|f\|_\infty\mP(t\geq \tau_\ell(y))
\\
&&\leq |\mE[f(X_t^\ell(x))-f(X_t^\ell(y))]|
+2\|f\|_\infty\mP(t\geq \tau_\ell(x))+2\|f\|_\infty\mP(t\geq \tau_\ell(y)).
\end{eqnarray*}
Thus, by  (\ref{z-2}),  we derive that
\begin{align*}
  \lim_{y\to x}\sup_{\|f\|_\infty\leq 1}|\mE[f(X_t(x))-f(X_t(y))]|&\leq 2\mP(t\geq \tau_\ell(x))+2\limsup_{y\rightarrow x}\mP(t\geq \tau_\ell(y))
  \\ & \leq 2\mP(t\geq \tau_\ell(x))+2\mP(t\geq \tau_{\ell-1}(x)).
\end{align*}
Letting  $\ell\rightarrow \infty$ in the above inequality and by (\ref{25}), we finish  the  verification of    Hypothesis $\textbf{H}_2$.

\end{proof}

\vspace{5mm}

{\bf Acknowledgements:}
 Special thanks are due to the referees, professor Zhao, Dong  and professor  Xicheng, Zhang for carefully checking details of the paper and
 helping to improve the new version.

% \section*{Acknowledges}

\end{document}